\pgfplotsset{compat=1.18} 
\newcounter{mythm}
\numberwithin{mythm}{section}
\declaretheorem[style=plain, sibling=mythm]{theorem}
\declaretheorem[style=plain, sibling=mythm]{proposition}
\declaretheorem[style=plain, sibling=mythm]{corollary}
\declaretheorem[style=definition, numberwithin=section]{definition}
\newcommand{\myorcid}[1]{\href{https://orcid.org/#1}{\includegraphics[height=10pt]{orcid.png}}}
\newcommand{\zeps}[3]{Z_{#1}(#2,#3)}
\newcommand{\sums}{\sideset{}{'} \sum}
\def\Xint#1{\mathchoice
   {\XXint\displaystyle\textstyle{#1}}%
   {\XXint\textstyle\scriptstyle{#1}}%
   {\XXint\scriptstyle\scriptscriptstyle{#1}}%
   {\XXint\scriptscriptstyle\scriptscriptstyle{#1}}%
   \!\int}
\def\XXint#1#2#3{{\setbox0=\hbox{$#1{#2#3}{\int}$}
     \vcenter{\hbox{$#2#3$}}\kern-.5\wd0}}
\def\ddashint{\Xint=}
\renewcommand{\Re}[1]{\mathrm{Re}(#1)}
\renewcommand{\Im}[1]{\mathrm{Im}(#1)}
\definecolor{myblue}{rgb}{0, 0.2, 1.0}
\newcommand{\new}[1]{{\textcolor{black}{#1}}}
\begin{document}

\title[Epstein zeta method for many-body lattice sums]{Epstein zeta method for many-body lattice sums}

\author[Buchheit]{Andreas A. Buchheit\orcidlink{0000-0003-4004-713X}}
\address{%
Department of Mathematics, Saarland University, Campus E1.1, Saarbrücken, 66123, Saarland, Germany \\
Department of Mathematics, ETH Zürich, Rämistrasse 101, Zürich, 8092, Switzerland
}

\author[Busse]{Jonathan K. Busse\orcidlink{0009-0001-3323-3455}}
\address{%
Institute of Software Technology, High-Performance Computing Department,German Aerospace Center (DLR), Linder Höhe, Cologne, 51147, North Rhine-Westphalia, Germany \\
Department of Mathematics, Saarland University, Campus E1.1, Saarbrücken, 66123, Saarland, Germany
}


\begin{abstract}
Many-body interactions arise naturally in the perturbative treatment of classical and quantum many-body systems and play a crucial role in the description of condensed matter systems. In the case of three-body interactions, the Axilrod-Teller-Muto (ATM) potential is highly relevant for the quantitative prediction of material properties. This work solves the long-standing issue of the numerical computation of the resulting energies in $d$-dimensional lattice systems. We present an efficiently computable representation of many-body lattice sums in terms of singular integrals over products of Epstein zeta functions. For three-body interactions in three dimensions, this approach reduces the runtime for computing the ATM lattice sum from weeks to minutes. Our approach further extends to a broad class of $n$-body lattice sums. We demonstrate that the computational cost of our method only increases linearly with $n$, evading the exponential increase in complexity of direct summation. We discuss techniques for numerically computing the arising singular integrals and compare the accuracy of our results against computable special cases and against direct summation in low dimensions, achieving full precision for exponents greater than the system dimension.  
Finally, we apply our method to study the stability of a three-dimensional lattice system with Lennard-Jones two-body interactions under the inclusion of an ATM three-body term at finite pressure, finding a transition from the face-centered-cubic to the body-centered-cubic lattice structure with increasing ATM coupling strength. This work establishes both the numerical and analytical foundation for an ongoing investigation into the influence of many-body interactions on the stability of matter.
\end{abstract}

\maketitle

\section{Introduction}

Predicting the properties of exotic materials, given their chemical composition, is a central goal of computational quantum chemistry and condensed matter physics. Using many-body perturbation theory, the energy per particle can be expanded in a formally exact series, where many-body interactions arise as corrections to additive two-body contributions \cite{muser2023interatomic}. These corrections are known to be important for ultracold crystals of noble gases, such as solid Argon, where they can contribute up to 9\,\% of the cohesive energy \cite{schwerdtfeger2016towards}. It has further been suggested that three-body interactions can contribute up to $51\,\%$ of the binding energy of bilayer graphene \cite{anatole2010two}. Even if the spatial dependency of the many-body interaction potential has been determined, for instance by coupled-cluster theory \cite{smits2020first}, the evaluation of the resulting energy is highly challenging as a high-dimensional singular lattice sum, \new{which converges only slowly with increasing summation cutoff,} needs to be computed \cite{hermannConvergenceManybodyExpansion2007}.

In third-order perturbation theory, an effective interaction between trimers of atoms arises. 
The leading order contribution to the three-body potential at large distances due to quantum dipole fluctions has been independently derived by Axilrod and Teller 
\cite{axilrod1943interaction}, and Muto \cite{muto1943force} (in Japanese). A detailed derivation can be found in Refs.~\cite{bade1957drude,axilrod1951triple,bell1970multipolar}.
\begin{definition}[Axilrod-Teller-Muto potential and cohesive energy]
\label{def:atm}
Consider a trimer of particles at positions $\bm r^{(1)},\bm r^{(2)},\bm r^{(3)}\in \mathds R^d$ with relative distance vectors $\bm x= \bm r^{(2)}-\bm r^{(1)}$, $\bm y= \bm r^{(3)}-\bm r^{(1)}$ and $\bm z=\bm r^{(3)}-\bm r^{(2)}$. The ATM potential then reads
\[
U^{(3)}_{\mathrm{ATM}}(\bm x,\bm y) = \bigg(\vert \bm x \vert^{-3}\vert \bm y \vert^{-3}\vert \bm  z\vert^{-3} - 3 \frac{(\bm x\cdot \bm y)(\bm y\cdot \bm z)(\bm z\cdot \bm x)}{\vert \bm x \vert^{5}\vert \bm y \vert^{5}\vert \bm  z\vert^{5}}\bigg)\bigg\vert_{\bm z=\bm y-\bm x}
\]
where $|\bm \cdot|$ denotes the Euclidean norm.
 The resulting cohesive energy per particle \new{for a lattice $\Lambda=A\mathds Z^d$, with $A\in \mathds R^{d\times d}$ regular,} takes the form
\[
E_{\mathrm{coh}}^{(3)} = \frac{1}{6}\,\sideset{}{'}\sum_{\bm x,\bm y \in  \Lambda} U^{(3)}_{\mathrm{ATM}}(\bm x,\bm y),
\]
where the primed sum excludes the ill-defined cases $\bm  x= \bm 0$, $\bm y = \bm  0$, and $\bm x=\bm y$ and where the prefactor $1/6$ avoids double counting. The sum converges absolutely in dimensions $d\in \{1,2,3\}$.
\end{definition} 

\new{In the following, we use the primed sum to exclude all ill-defined summands, namely those for which expressions of the form $\vert \bm{\cdot} \vert^{-\nu}$ would be evaluated at zero.}

The evaluation of the arising high-dimensional and slowly converging lattice sums to a few digits can take up to 4 weeks on a single CPU \cite{schwerdtfeger2016towards}. Many-body interactions beyond the three-body term have been discussed in detail in \cite{bade1957drude} and are relevant for material properties \cite{schwerdtfeger2016towards}, yet the evaluation of the arising lattice sums quickly becomes impossible due to the exponential increase in complexity with number of interacting particles $n$.  We here note recent developments in modelling many-body interactions using cluster expansions for short-range interactions  \cite{bochkarev2024graph}.

This work solves the above issue and provides a numerically efficient method for computing a general class of many-body lattice sums. Importantly, our method allows us to compute general power-law three-body interaction energies such as the ATM cohesive energy, providing full precision for any lattice, in any dimension, and a very general class of potentials. Our method serves as the analytical and numerical foundation for Ref.~\cite{roblesnavarro2025exact}, where the influence of three-body interactions on the stability of cuboidal crystal lattices is investigated.

This article is intended for an interdisciplinary audience with different interests and aims, ranging from applied mathematicians over theoretical chemists to condensed matter and high-energy physicists. For this reason, we have structured it as follows. In Sec.~\ref{sec:zeta_representation}, we define many-body zeta functions as meromorphic continuations of iterated high-dimensional lattice sums. We then show that these zeta functions exhibit an efficiently computable representation in the form of integrals over products of Epstein zeta functions. The Axilrod-Teller\new{-Muto} (ATM) lattice sum, as well as other many-body lattice sums, then follow as a recombination of these zeta functions. In Sec.~\ref{sec:num_int}, we present techniques that allow for an efficient evaluation of the arising singular integrals.  In Sec.~\ref{sec:benchmarks}, we benchmark the precision of our approach against analytically computable formulas and direct summation in low dimensions. Importantly, we show that the computational cost of our method for $n$-body lattice sums only scales linearly with $n$, avoiding the exponential scaling of direct summation and thus allowing for the efficient evaluation of lattice sums of very large dimension. Finally, we provide new results for the contribution of three-body terms to the cohesive energy of crystals in Sec.~\ref{sec:application}. We draw our conclusions and provide an outlook in Sec.~\ref{sec:outlook}.

\section{Zeta representation of many-body lattice sums}
\label{sec:zeta_representation}
We begin our discussion by introducing the concept of Bravais lattices. 
\begin{definition}[Lattices]
    Let $A\in \mathds R^{d\times d}$ regular. Then, we call $\Lambda = A\mathds Z^d$ a Bravais or monoatomic lattice. An important quantity here is the volume of the elementary lattice cell $V_\Lambda = \vert \det A\vert$. The reciprocal lattice is defined as $\Lambda^\ast = A^{-T} \mathds Z^d$. Finally, the Brillouin zone $\mathrm{BZ} = A^{-T}(-1/2,1/2)^d$ is the elementary lattice cell of the reciprocal lattice.
\end{definition}

The main ingredient for our method for computing many-body lattice sums is the Epstein zeta function. Originally introduced by Epstein in 1903, it generalizes the Riemann zeta function to oscillatory sums over lattices in higher dimensions \cite{epstein1903theorieI,epstein1903theorieII}. 

\begin{definition}[Epstein zeta function]
    For a lattice $\Lambda =A \mathds Z^d$, $\bm x,\bm k\in \mathds R^d$, and $\nu\in \mathds C$, we define the Epstein zeta function as follows
    \[
   \new{\zeps{\Lambda,\nu}{\bm x}{\bm k}}=
    \sums_{\bm z \in \Lambda} \frac{e^{-2\pi i \bm z\cdot \bm k}}{\vert \bm z-\bm x \vert^\nu},\quad \mathrm{Re}(\nu)>d,
    \]
    and meromorphically continue to $\nu\in \mathds  C$. We call the special case $\bm x=\bm 0$ the simple Epstein zeta function, which we denote by $Z_{\Lambda,\nu}(\bm k)$.
\end{definition}

Different methods for computing the Epstein zeta function and its meromorphic continuation have been suggested in the past, starting from early works by Chowla and Selberg \cite{chowla1949epstein}, over expansions in terms of Bessel functions by Terras \cite{terras1973bessel}, to an expansion in terms of incomplete gamma functions by Crandall \cite{crandall2012unified}. In our recent work \cite{buchheit2024epstein}, we have developed an efficient algorithm for computing the Epstein zeta function, based on Crandall's work, with a high-performance implementation in the open source library EpsteinLib, freely available on \href{https://github.com/epsteinlib/epsteinlib}{GitHub}. It also discusses the analytic properties of the Epstein zeta function in all its arguments. The library has already been used to successfully predict fractional magnetization in Ising compounds \cite{yadav2024observation}. It has further been applied to study long-range interacting hardcore bosons \cite{koziol2024quantum}, and the melting of Devil's staircases in quantum magnets \cite{koziol2025melting}. The Epstein zeta function forms the foundation for the Singular Euler--Maclaurin (SEM) expansion, a recent generalization of the Euler--Maclaurin summation formula to physically relevant long-range interactions on higher-dimensional lattices \cite{buchheit2022singular,buchheit2022efficient}.

We now consider the computation of the following many-body lattice sums, which we call $n$-body zeta functions.
\begin{definition}[Many-body zeta function]
\label{multi-body-zeta}
    Let $\Lambda=A\mathds Z^d$ with $A\in \mathds R^{d\times d}$ regular, $n\in \mathds N_+$, and $\bm \nu \in \mathds C^n$. We define the $n$-body zeta function as the lattice sum
    \[
    \zeta_\Lambda^{(n)}(\bm \nu) = \sideset{}{'}\sum_{\bm x^{(1)},\dots, \bm x^{(n-1)} \in \Lambda}~  \prod_{j=1}^{n} \frac{1}{\vert \bm x^{(j)}-\bm x^{(j-1)} \vert^{\nu_j}},\quad \mathrm{Re}(\new{\nu_j})>d,\quad \new{j=1,\dots,n},
    \]
    \new{with fixed $\bm x^{(0)}=\bm x^{(n)}\in \Lambda$}. \new{The choice of $\bm x^{(0)}$ does not alter the value of the sum, due to translational invariance, and we can thus set $\bm x^{(0)}=\bm 0$.}
\end{definition}
\new{The well-definedness of the many-body zeta function will be discussed in \Cref{thm:epstein_representation}.}

\begin{figure}
    \centering
    \includegraphics[width=1.0\linewidth]{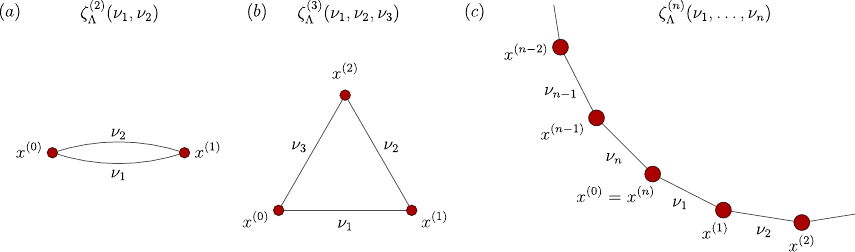}
    \caption{The many-body zeta function $\zeta^{(n)}_\Lambda$ described as a circular graph for (a) $n=2$, (b) $n=3$, and $(c)$ general $n$. }
    \label{fig:circular}
\end{figure}
These sums can be represented by circular graphs as shown in Fig.~\ref{fig:circular}, where the nodes correspond to lattice site and the edges describe the couplings with corresponding interaction exponents. The arising lattice sums appear directly in the evaluation of many-body lattice sums. The case $\nu_i=3$ corresponds to the isotropic part of the $n$-body interaction energy due to dipole-dipole van-der-Waals interactions in a Drude model \cite[Eqs.~(7b), (9)]{bade1957drude}. Related lattice sums in dimension $d=1$, sometimes denoted as multiple zeta functions, have been discussed in number theory \cite{gil2017multiple,zhao2016multiple}. The lattice sums in \Cref{multi-body-zeta} appear as a central ingredient for high-order perturbative treatments of quantum spin systems with long-range interactions, see e.g.~\cite[Eq.\,(179)]{adelhardt2024monte}.

An appropriate recombination of $3$-body zeta functions allows us to generate general power-law three-body interactions. This includes the highly relevant case of the Axilrod-Teller-Muto (ATM) cohesive energy, which we show in the following proposition.

\begin{proposition}
    Let $\Lambda = A\mathds Z^d$, with $A\in \mathds R^{d\times d}$ regular. Then, the ATM cohesive energy as in \Cref{def:atm} can be rewritten in terms of three-body zeta functions as follows,
    \[
    E^{(3)}_\mathrm{coh}
    = \frac{1}{24}\zeta_\Lambda^{(3)}(3,3,3) - \frac{3}{16} \zeta_\Lambda^{(3)}(-1,5,5)+\frac{3}{8} \zeta_\Lambda^{(3)}(1,3,5).
    \]
\end{proposition}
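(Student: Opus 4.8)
The plan is to reduce the statement to an exact, pointwise algebraic decomposition of the summand $U^{(3)}_{\mathrm{ATM}}$, after which the lattice sum splits into three-body zeta functions. First I would eliminate the angular factor. Writing $a=|\bm x|^2$, $b=|\bm y|^2$, $c=|\bm z|^2$ with $\bm z=\bm y-\bm x$, the polarization identity gives $\bm x\cdot\bm y=\tfrac12(a+b-c)$, $\bm y\cdot\bm z=\tfrac12(-a+b+c)$, and $\bm z\cdot\bm x=\tfrac12(-a+b-c)$. Substituting these into the numerator and expanding, the triple product collapses to a fully symmetric cubic,
\[
(\bm x\cdot\bm y)(\bm y\cdot\bm z)(\bm z\cdot\bm x)=\tfrac18\big(a^3+b^3+c^3-a^2b-a^2c-ab^2-b^2c-ac^2-bc^2+2abc\big).
\]
Dividing by $|\bm x|^5|\bm y|^5|\bm z|^5=a^{5/2}b^{5/2}c^{5/2}$ turns each monomial $a^ib^jc^k$ into $|\bm x|^{2i-5}|\bm y|^{2j-5}|\bm z|^{2k-5}$, i.e. into the summand of $\zeta_\Lambda^{(3)}$ at the exponent triple $(5-2i,5-2j,5-2k)$. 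The ten monomials fall into exactly three orbits under permutation: the pure cubes give permutations of $(-1,5,5)$, the mixed quadratic--linear terms give permutations of $(1,3,5)$, and $abc$ gives $(3,3,3)$.

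The second ingredient I would establish is the full permutation symmetry of $\zeta_\Lambda^{(3)}(\nu_1,\nu_2,\nu_3)$ in its three arguments. With $\bm x^{(0)}=\bm x^{(3)}=\bm 0$ the three edge lengths are $|\bm x|,|\bm y-\bm x|,|\bm y|$; the transposition $(\bm x,\bm y)\mapsto(\bm y,\bm x)$ swaps $\nu_1\leftrightarrow\nu_3$, while translating the whole triangle by a lattice vector so that a different vertex sits at the origin (e.g.\ $(\bm x,\bm y)\mapsto(-\bm x,\bm y-\bm x)$) realizes the remaining generators. Since these maps are bijections of $\Lambda^2$ preserving the excluded set, they generate the full $S_3$ action on the three edges, so $\zeta_\Lambda^{(3)}$ is invariant under all permutations of $(\nu_1,\nu_2,\nu_3)$; this matches the triangular symmetry of the graph in Fig.~\ref{fig:circular}(b) and extends to all $\bm\nu$ by meromorphic continuation.

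With these two facts in hand, I would sum the pointwise decomposition over the lattice, collapse each permutation orbit to a single zeta value via the symmetry, and track the multiplicities: the three pure cubes each contribute $\zeta_\Lambda^{(3)}(-1,5,5)$, the six mixed terms each contribute $\zeta_\Lambda^{(3)}(1,3,5)$, and the $2abc$ term contributes $2\zeta_\Lambda^{(3)}(3,3,3)$. Carrying the prefactors $\tfrac16$ and $-3$ through, the $abc$ contribution combines with the isotropic first term of $U^{(3)}_{\mathrm{ATM}}$ as $\tfrac16-\tfrac18=\tfrac1{24}$, while the two remaining orbits yield the coefficients $-\tfrac3{16}$ and $+\tfrac38$, giving the claimed formula.

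The only genuine subtlety --- and the step I would be most careful about --- is the legitimacy of splitting the single absolutely convergent ATM sum into the separate zeta functions, one of which, $\zeta_\Lambda^{(3)}(-1,5,5)$, has an argument below the naive threshold $\mathrm{Re}(\nu)>d$. I would resolve this by checking that each constituent sum is nonetheless absolutely convergent in the physical dimensions $d\in\{1,2,3\}$: every summand is homogeneous of degree $-9$ in the triangle scale, and power-counting over the degenerate strata (one edge bounded, two large) yields the pairwise conditions $\nu_i+\nu_j>d$ together with the total condition $\nu_1+\nu_2+\nu_3>2d$, all of which hold for $(-1,5,5)$, $(1,3,5)$ and $(3,3,3)$ when $d\le 3$. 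Absolute convergence then licenses the rearrangement; alternatively one performs the split for a complex regulator $s$ with $\mathrm{Re}(s)$ large, where convergence is immediate, and specializes the resulting identity of meromorphic functions at the physical point.
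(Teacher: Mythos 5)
Your proposal is correct and follows essentially the same route as the paper's proof: the polarization identities converting the dot products into squared norms, expansion of the triple product into its ten monomials, and collapse of the permutation orbits $(-1,5,5)$, $(1,3,5)$, $(3,3,3)$ via the $S_3$ symmetry of the lattice sum, with identical coefficient bookkeeping ($\tfrac16-\tfrac18=\tfrac1{24}$, etc.). The only difference is that you additionally justify the splitting by verifying absolute convergence of each constituent sum (notably $\zeta_\Lambda^{(3)}(-1,5,5)$ for $d\le 3$), a point the paper's proof passes over in silence --- a welcome extra layer of rigor, but not a different method.
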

\begin{proof}
The first term in ATM potential in Def.~\ref{def:atm} is already in the form of a three-body zeta function and equals $ \zeta_{\Lambda}^{(3)}(3,3,3)/6$. 
    For the second, first notice that 
    \[
\bm{x}\cdot \bm{y} = (\bm{x}^2+\bm{y}^2-\bm{z}^2)/2,\quad
\bm{y}\cdot \bm{z} = (\bm{y}^2+\bm{z}^2-\bm{x}^2)/2,\quad
 \bm{z}\cdot \bm{x} =- (\bm{z}^2 + \bm{x}^2-\bm{y}^2)/2.
   \]
    \new{using the notation $\bm x^2=x_1^2+\dots+x_d^2$ and} $\bm z=\bm y-\bm x$, which we adopt from now on. We then find
    \[
    -\frac{1}{2}\,\sideset{}{'}\sum_{\bm x,\bm y\in \Lambda} \frac{(\bm x\cdot \bm y)(\bm y\cdot \bm z)(\bm z\cdot \bm x)}{\vert \bm x \vert^{5}\vert \bm y \vert^{5}\vert \bm  z\vert^{5}} = \frac{1}{16}\,\sideset{}{'}\sum_{\bm x,\bm y\in \Lambda} \frac{(\bm x^2+\bm y^2-\bm z^2)(\bm y^2+\bm z^2-\bm x^2)(\bm z^2+\bm x^2-\bm y^2)}{\vert \bm x \vert^{5}\vert \bm y \vert^{5}\vert \bm  z\vert^{5}}.
    \]
    Using that permutations of $\{\bm x,\bm y,\bm z\}$ do not alter the value of the lattice sum, the right-hand side equals
    \[
    \frac{1}{16}\,\sideset{}{'}\sum_{\bm x,\bm y\in \Lambda} \bigg(- 2 \frac{1}{\vert \bm x \vert^{3}\vert \bm y \vert^{3}\vert \bm  z\vert^{3}}-3 \frac{\vert \bm x\vert}{\vert \bm y \vert^{5}\vert \bm  z\vert^{5}} + 6 \frac{1}{\vert \bm x \vert^{1}\vert \bm y \vert^{3}\vert \bm  z\vert^{3} } \bigg).
    \]
    We then find the desired result after writing these sums in terms of three-body zeta functions and including the \new{first} term.
\end{proof}

\new{We will now derive an efficiently computable representation of the $n$-body zeta function in terms of integrals over products of Epstein zeta functions. For the proof, we \new{will use the following well-known result from Fourier analysis,}
    \[
    V_\Lambda \int_{\mathrm{BZ}} e^{-2\pi i \bm x\cdot \bm k}\,\mathrm d \bm k = \delta_{\bm x,\bm 0},
    \]
with $\delta$ the Kronecker delta, which is equivalent to the tensorized one-dimensional result after the substitution $\tilde{\bm k}=A^T \bm k$.}

We then present the main result of this work, the Epstein representation of $n$-body zeta functions. It allows to rewrite  $n$-body zeta functions in terms of integrals involving products of Epstein zeta functions. Using EpsteinLib and knowledge of the singularities of the Epstein zeta function, these integrals can be efficiently evaluated.
\begin{theorem}[Epstein representation]
\label{thm:epstein_representation}
    Let $\Lambda=A\mathds Z^d$ with $A\in \mathds R^{d\times d}$ regular and $\bm \nu \in \mathds C^n$, $n\in \mathds N_+$, such that $\Re{\nu_i}>d$ for $1\le i\le n$. The $n$-body zeta function is then well-defined and admits the representation 
    \[
    \zeta_\Lambda^{(n)}(\bm \nu)=V_\Lambda\, \int \limits_{\mathrm{BZ}} \prod_{i=1}^n Z_{\Lambda,\nu_i} (\bm k) \,\mathrm d \bm k.
    \]
    The right-hand side can be meromorphically continued to $\nu_i\in \mathds C$ by means of the Hadamard integral.
\end{theorem}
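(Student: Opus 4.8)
\emph{Plan.} The plan is to prove the stated identity directly in the region $\Re{\nu_i}>d$ by expanding the product on the right-hand side into a single multiple lattice sum, exchanging summation and integration, and collapsing the resulting Brillouin-zone integrals by means of \Cref{lem:brillouin_int}. The crucial bookkeeping step is a change of summation variables that matches the constrained sum produced by the Kronecker deltas with the $n$-body zeta function. First I would pass to the difference variables $\bm z_j=\bm x^{(j)}-\bm x^{(j-1)}$ for $1\le j\le n$, using the convention $\bm x^{(0)}=\bm x^{(n)}=\bm 0$. Since the $\bm x^{(j)}$ range freely over $\Lambda$, the assignment $(\bm x^{(1)},\dots,\bm x^{(n-1)})\mapsto(\bm z_1,\dots,\bm z_n)$ is a bijection onto the tuples in $\Lambda^n$ with $\sum_{j=1}^n\bm z_j=\bm 0$ (recovered via $\bm x^{(j)}=\bm z_1+\dots+\bm z_j$), and the terms excluded by the primed sum correspond exactly to the conditions $\bm z_j\neq\bm 0$. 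This rewrites
\[
\zeta_\Lambda^{(n)}(\bm \nu)=\sums_{\substack{\bm z_1,\dots,\bm z_n\in\Lambda\\ \bm z_1+\dots+\bm z_n=\bm 0}}\prod_{j=1}^n\frac{1}{\vert\bm z_j\vert^{\nu_j}}.
\]
Well-definedness for $\Re{\nu_i}>d$ is immediate from this form: the constrained sum is a subsum of the unconstrained one, which factorizes as $\prod_{j}\bigl(\sums_{\bm z\in\Lambda}\vert\bm z\vert^{-\Re{\nu_j}}\bigr)=\prod_j Z_{\Lambda,\Re{\nu_j}}(\bm 0)<\infty$, so the series converges absolutely.

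Next I would expand the right-hand side. Writing out each simple Epstein zeta function and multiplying the $n$ absolutely convergent series gives
\[
\prod_{i=1}^n Z_{\Lambda,\nu_i}(\bm k)=\sums_{\bm z_1,\dots,\bm z_n\in\Lambda}\frac{e^{-2\pi i(\bm z_1+\dots+\bm z_n)\cdot\bm k}}{\prod_{i=1}^n\vert\bm z_i\vert^{\nu_i}}.
\]
Since this integrand is bounded in modulus by $\prod_i\vert\bm z_i\vert^{-\Re{\nu_i}}$ uniformly in $\bm k$, and $\mathrm{BZ}$ has finite volume with the same factorization bound controlling the total, Tonelli's theorem permits exchanging the sum with $V_\Lambda\int_{\mathrm{BZ}}(\cdot)\,\mathrm d\bm k$. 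Because $\bm z_1+\dots+\bm z_n\in\Lambda$, \Cref{lem:brillouin_int} turns each Brillouin-zone integral into $\delta_{\bm z_1+\dots+\bm z_n,\bm 0}$, so only tuples with vanishing sum survive and the right-hand side reduces precisely to the constrained sum above. This establishes the identity on $\Re{\nu_i}>d$.

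Finally, for the analytic continuation I would invoke the meromorphic continuation of each $Z_{\Lambda,\nu_i}(\bm k)$ in $\nu_i$ (already available from its definition): the integrand is then meromorphic in $\bm\nu$, but the Epstein factors develop singularities on $\mathrm{BZ}$, so the ordinary integral must be replaced by its Hadamard finite part, whose analyticity is the object of \Cref{sec:num_int}. I expect the algebraic core---the variable change, the series expansion, and the application of \Cref{lem:brillouin_int}---to be routine, with the only genuine subtlety in the convergent regime being the Tonelli justification, reduced here to the factorization bound. The substantive obstacle lies in the final sentence: controlling the singularity structure of the product of Epstein zeta functions across $\mathrm{BZ}$ well enough to define and analyze the Hadamard-regularized integral, which is where the real analytic work is concentrated.
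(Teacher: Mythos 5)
Your proposal is correct and follows essentially the same route as the paper: passing to difference variables $\bm z_j=\bm x^{(j)}-\bm x^{(j-1)}$ to obtain the zero-sum constrained lattice sum, invoking \Cref{lem:brillouin_int} to trade the constraint for a Brillouin-zone integral, justifying the interchange by Fubini--Tonelli via the factorized bound $\prod_j Z_{\Lambda,\Re{\nu_j}}(\bm 0)$, and deferring the Hadamard continuation to the singularity decomposition of the Epstein zeta function. The only (immaterial) difference is organizational --- you transform both sides to the common constrained sum, whereas the paper rewrites the left-hand side into the right-hand side in one direction.
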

\begin{proof}
    \new{We begin with the defining lattice sum, \[\sideset{}{'}\sum_{\bm x^{(1)},\dots, \bm x^{(n-1)} \in \Lambda}~  \prod_{j=1}^{n} \frac{1}{\vert \bm x^{(j)}-\bm x^{(j-1)} \vert^{\nu_j}},\quad \mathrm{Re}(\new{\nu_j})>d,\quad \new{j=1,\dots,n},\] choosing the origin such that $\bm x^{(n)} = \bm x^{(0)} = \bm 0$ and first show well-definedness.} \new{This is achieved by} introducing several changes of variables, beginning with
    \[
    \bm x^{(1)}=\bm z^{(1)}, \quad \bm x^{(2)}= \bm z^{(1)}+\bm z^{(2)}.
    \]
    Noting that $\Lambda-\bm x^{(1)}=\Lambda$, we then have 
    \begin{align*}
    \zeta_\Lambda^{(n)}(\bm \nu) = \sideset{}{'}\sum_{\bm z^{(1)},\bm z^{(2)},\bm x^{(3)},\dots, \bm x^{(n-1)} \in \Lambda} &\vert \bm z^{(1)} \vert^{-\nu_1} \vert \bm z^{(2)} \vert^{-\nu_2} \big \vert \bm x^{(3)}-(\bm z^{(1)}+\bm z^{(2)}) \big\vert^{-\nu_3}\\ &\times \prod_{j=4}^{n}\vert \bm x^{(j)}-\bm x^{(j-1)} \vert^{-\nu_j}.
    \end{align*}
    After substituting
    \[
    \bm x^{(i)}= \sum_{j=1}^i\bm z^{(j)},\quad i=1,\dots, n-1,
    \]
    the lattice sum takes the following form
    \[
    \sideset{}{'}\sum_{\bm z^{(1)},\dots,\bm z^{(n-1)} \in \Lambda} \prod_{i=1}^{n-1}\vert \bm z^{(i)} \vert^{-\nu_i} \bigg\vert -\sum_{i=1}^{n-1} \bm z^{(i)}\bigg\vert^{-\nu_n}.
    \]
    This sum over $n-1$ lattice vectors can be enlarged to a sum of $n$ lattice vectors with the restriction that 
    \[
    \bm z^{(n)} = -\sum_{i=1}^{n-1} \bm z^{(i)},
    \]
    which is equivalent to the condition that the sum over all $\bm z^{(i)}$ vanishes.
    Thus
    \[
    \zeta_\Lambda^{(n)}(\bm \nu) = \sideset{}{'}\sum_{\bm z^{(1)},\dots,\bm z^{(n)} \in \Lambda} \prod_{i=1}^n \vert \bm z^{(i)}\vert^{-\nu_i}
     \delta_{\bm z^{(1)}+\ldots+\bm z^{(n)},\bm 0}
    \;.
    \]
    \new{Absolute convergence and thus well-definedness of the above sum under the condition $s_j=\mathrm{Re}(\nu_j)>d$ then follows directly from the bound
    \[
   |\zeta_\Lambda^{(n)}(\bm \nu)| \le \sideset{}{'}\sum_{\bm z^{(1)},\bm z^{(2)},\dots, \bm z^{(n)} \in \Lambda}\prod_{j=1}^{n} \vert \bm z^{(j)} \vert^{-s_j} =\prod_{j=1}^{n} Z_{\Lambda,s_j}(\bm 0)<\infty.\]} \new{We continue by transforming the equality condition on the right-hand side} into an integral over the Brillouin zone
    \[
     \sideset{}{'}\sum_{\bm z^{(1)},\dots,\bm z^{(n)} \in \Lambda} \prod_{i=1}^n \vert \bm z^{(i)}\vert^{-\nu_i} V_\Lambda \int_\mathrm{BZ} e^{-2\pi i \sum_{i=1}^n \bm z^{(i)}\cdot \bm k } \,\mathrm d\bm k.
    \]
    
    Due to absolute convergence of the sum, the Fubini-Tonelli theorem permits to exchange the order of integration and summation. We then pair the oscillatory terms with the matching algebraic singularities to obtain the desired products of Epstein zeta functions
    \[
    \zeta_\Lambda^{(n)}(\bm \nu) = V_\Lambda \int_{\mathrm{BZ}} \prod_{i=1}^n \bigg(\sum_{\bm z\in \Lambda} \frac{e^{-2\pi i \bm z\cdot \bm k}}{\vert \bm z\vert^{-\nu_i}}\bigg)\,\mathrm d \bm k = V_\Lambda \int_{\mathrm{BZ}} \prod_{i=1}^n Z_{\Lambda,\nu_i}(\bm k)\,\mathrm d \bm k.
    \]
    The meromorphic continuation by means of the Hadamard regularization \cite{gelfand1964generalizedI} is then possible as the Epstein zeta function can be separated into a homogeneous algebraic singularity and an analytic regularized function, see \cite{buchheit2024computation}.
\end{proof}

For small values of $n$, the many-body zeta function can be written in terms of known functions. 
While the $1$-body zeta function vanishes, the $2$-body zeta function again reduces to an Epstein zeta function, which is summarized in the following corollary.

\begin{corollary}
\label{OneTwoBodyZetaFunction}
    Let $\Lambda=A\mathds Z^d$ with $A\in \mathds R^{d\times d}$ regular, and $\nu_1,\nu_2\in \mathds C$. Then
    \begin{align*}
    \zeta_{\Lambda}^{(1)}(\nu_1) &= V_\Lambda \,\ddashint_{\mathrm{BZ}} Z_{\Lambda,\nu_1}(\bm k)\,\mathrm d \bm k = 0,\\
    \zeta_{\Lambda}^{(2)}(\nu_1,\nu_2) & = V_\Lambda \,\ddashint_{\mathrm{BZ}} Z_{\Lambda,\nu_1}(\bm k)Z_{\Lambda,\nu_2}(\bm k)\,\mathrm d \bm k = Z_{\Lambda,\nu_1+\nu_2}(\bm 0),
    \end{align*}
    where the dashed integral denotes the meromorphic continuation to $\nu_i\in \mathds C$ by means of the Hadamard integral.
    \begin{proof}
        \new{In analogy to the proof of \Cref{thm:epstein_representation}, we} can exchange integral and sum for $\mathrm{Re}(\nu_i)>d$ by the Fubini-Tonelli theorem. The integral over the exponential yields
        \[
        \zeta_{\Lambda}^{(1)}(\nu_1) = ~\sideset{}{'} \sum_{\bm x\in \Lambda} \frac{\delta_{\bm x,\bm 0}}{\vert \bm x\vert^\nu} = 0,
        \]
        where the right-hand side is, of course, already holomorphic for $\nu\in \mathds C$. Following an analogous argument, we find 
        \begin{align*}
          \zeta_{\Lambda}^{(2)}(\nu_1,\nu_2) =\,\sideset{}{'} \sum_{\bm x,\bm y\in \Lambda} \frac{\delta_{\bm x+\bm y,\bm 0}}{\vert \bm x\vert^{\nu_1}\vert \bm y\vert^{\nu_2}} = Z_{\Lambda,\nu_1+\nu_2}(\bm 0).
        \end{align*}
        \new{The result then follows by the uniqueness theorem for holomorphic functions.}
    \end{proof}
\end{corollary}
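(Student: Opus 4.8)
The two displayed integral representations are nothing but the cases $n=1$ and $n=2$ of \Cref{thm:epstein_representation}, so no separate argument is needed for them; the real task is to evaluate the resulting integrals in closed form and to track the meromorphic continuation. The plan is to reduce each Brillouin-zone integral to a lattice sum by means of \Cref{lem:brillouin_int} in the region of absolute convergence $\Re{\nu_i}>d$, obtain the stated closed forms there, and then propagate the identities to all $\nu_i\in\mathds C$ via the identity theorem for meromorphic functions.

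First I would treat the one-body case. Inserting the defining lattice sum of $Z_{\Lambda,\nu_1}$ and exchanging sum and integral by Fubini--Tonelli (legitimate since $\Re{\nu_1}>d$ forces absolute convergence), the $\bm k$-integral of each summand is $V_\Lambda\int_{\mathrm{BZ}} e^{-2\pi i\bm x\cdot\bm k}\,\mathrm d\bm k=\delta_{\bm x,\bm 0}$ by \Cref{lem:brillouin_int}. Because the sum is primed and hence omits $\bm x=\bm 0$, every surviving summand is annihilated and the total vanishes; as the constant $0$ is entire in $\nu_1$, the continuation is automatic.

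The two-body case proceeds identically but carries the only genuine content. After inserting both lattice sums and applying Fubini--Tonelli, the two exponentials combine into $e^{-2\pi i(\bm x+\bm y)\cdot\bm k}$, and \Cref{lem:brillouin_int} replaces the integral by $\delta_{\bm x+\bm y,\bm 0}$. This forces $\bm y=-\bm x$ and collapses the double sum onto its diagonal, so that the two exponents merge:
\[
\zeta_{\Lambda}^{(2)}(\nu_1,\nu_2)=\sums_{\bm x\in\Lambda}\frac{1}{\vert\bm x\vert^{\nu_1}\vert\bm x\vert^{\nu_2}}=\sums_{\bm x\in\Lambda}\frac{1}{\vert\bm x\vert^{\nu_1+\nu_2}}=Z_{\Lambda,\nu_1+\nu_2}(\bm 0).
\]
The fact worth stressing is that the result depends only on $\nu_1+\nu_2$, even though the integrand $Z_{\Lambda,\nu_1}Z_{\Lambda,\nu_2}$ does not; this collapse is the substance of the corollary.

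Finally, to reach arbitrary $\nu_i\in\mathds C$ I would invoke meromorphic continuation: both sides are meromorphic in $(\nu_1,\nu_2)\in\mathds C^2$---the left through the Hadamard regularization supplied by \Cref{thm:epstein_representation}, the right through the standard continuation of the simple Epstein zeta function---and they coincide on the nonempty open set $\{\Re{\nu_i}>d\}$, hence everywhere both are defined. The hard part will be purely in justifying this last step rigorously, namely confirming that the dashed (Hadamard) integral really is the analytic continuation of the convergent one rather than a formal symbol; this rests on the separation of $Z_{\Lambda,\nu_i}(\bm k)$ into a homogeneous algebraic singularity at $\bm k=\bm 0$ plus an analytic remainder, exactly as established in the proof of \Cref{thm:epstein_representation}.
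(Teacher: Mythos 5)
Your proposal is correct and follows essentially the same route as the paper: insert the defining lattice sums, exchange sum and integral via Fubini--Tonelli in the region $\Re{\nu_i}>d$, apply \Cref{lem:brillouin_int} to produce Kronecker deltas, and collapse the sums (the primed sum kills the $n=1$ case; the constraint $\bm y=-\bm x$ merges the exponents in the $n=2$ case). Your closing appeal to the identity theorem for the continuation to all $\nu_i\in\mathds C$ is just a more explicit rendering of what the paper states tersely, so nothing of substance differs.
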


\section{Quadrature and meromorphic continuation}
\label{sec:num_int}
Special care needs to be taken in the numerical computation of the arising integrals, as the Epstein zeta function exhibits a power-law singularity at $\bm k=0$. One can show that the Epstein zeta function can be decomposed into the sum of an analytic function $Z_{\Lambda,\nu}^{\mathrm{reg}}(\bm k)$ on the Brillouin zone and a power-law or logarithmic singularity $\hat s_\nu(\bm k)=\mathcal F (\vert \bm \cdot \vert^\nu) $. The decomposition reads \cite{buchheit2024computation,buchheit2024epstein}
\[
Z_{\Lambda,\nu}(\bm k)=Z_{\Lambda,\nu}^{\mathrm{reg}}(\bm k) +\frac{\hat s_\nu(\bm k)}{\vert \det A\vert},
\]
and 
\begin{align*}
\hat s_\nu(\bm k) & = \pi^{\nu-d/2}\frac{ \Gamma ((d-\nu)/2)}{\Gamma(\nu/2) } |\bm k|^{\nu-d}, && \nu\neq d+2\mathds {N}_0, \\
\hat s_\nu(\bm k) & = \frac{\pi^{2m+d/2}}{\Gamma(m+d/2)}\frac{(-1)^{m+1}}{m!} \new{|\bm k|^{2m} \log (\pi  \bm k^{2})},&& \nu= d+2m,\ m\in \mathds N_0,
\end{align*}
\new{where we again employ the notation $\bm k^2=k_1^2+\ldots+k_d^2$.}
\new{
Note that for $\nu\in d+2\mathds N_0$, an additional logarithmic factor appears, see i.e. \cite[Sec. 2.4]{buchheit2024epstein}.
}
The integration of the smooth part of the integral is then performed via Gauss quadrature, whereas the singularity  is handled via a Duffy transformation \cite{duffy1982quadrature}.
\begin{corollary}[Duffy transformation]
\label{duffy}
    For $\Lambda=A\mathds Z^d$ and $\bm \nu\in \mathds C^n$ where $\mathrm{Re}(\nu_i)>d$, $1\le i\le n$, we have
    $$
    \zeta_\Lambda^{(n)}(\bm \nu) = \new{2}\sum_{\bm p\in \{\pm 1\}^{d - 1} }\sum_{j=0}^{d-1} \int_{0}^{1/2} u^{d-1} \int_{\new{[0,1]}^{d-1}}   f\big(u \new{\bm w (\bm v)}\big)\,\mathrm d \bm v\,\mathrm d u,
    $$
    with 
    $$\bm w(\bm v) =A^{-T}\big(\sigma^j (\new{\bm p \bm v}, 1)^T\big) =A^{-T} \big(\sigma^j (\new{p_1 v_1},\dots ,\new{p_{d-1} v_{d-1}} , 1)^T\big),$$ 
    the cyclic permutation $\sigma (x_1,\dots, x_d)^T = (x_d,x_1,\dots ,x_{d-1})^T$ and
    $$
    f(\bm k) = \prod_{i=1}^n Z_{\Lambda,\nu_i}(\bm k).
    $$
\end{corollary}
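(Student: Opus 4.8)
The plan is to obtain the stated formula as a pure change of variables applied to the Epstein representation of \Cref{thm:epstein_representation}, which in the regime $\mathrm{Re}(\nu_i)>d$ expresses $\zeta_\Lambda^{(n)}(\bm\nu)=V_\Lambda\int_{\mathrm{BZ}}f(\bm k)\,\mathrm d\bm k$ as an absolutely convergent integral of a bounded integrand: each factor obeys $|Z_{\Lambda,\nu_i}(\bm k)|\le Z_{\Lambda,\mathrm{Re}(\nu_i)}(\bm 0)<\infty$, so no singularity is present and every step below is a legitimate change of variables on a finite integral. First I would substitute $\bm k=A^{-T}\bm q$ to map $\mathrm{BZ}$ onto the cube $(-1/2,1/2)^d$; the Jacobian $|\det A^{-T}|=1/V_\Lambda$ cancels the prefactor $V_\Lambda$, leaving $\int_{(-1/2,1/2)^d}g(\bm q)\,\mathrm d\bm q$ with $g(\bm q):=f(A^{-T}\bm q)$.

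Next I would exploit that $f$ is even. Since $\Lambda=-\Lambda$, the substitution $\bm z\mapsto-\bm z$ in the defining sum gives $Z_{\Lambda,\nu}(-\bm k)=Z_{\Lambda,\nu}(\bm k)$, so $f$ and hence $g$ are even. I then decompose the cube into the $d$ bipyramids indexed by which coordinate attains the maximal absolute value; each bipyramid splits into a positive and a negative pyramid, and evenness of $g$ identifies their integrals, contributing a factor $2$ and reducing the computation to the $d$ positive pyramids $P_i^{+}=\{\bm q:\ 0\le q_i\le 1/2,\ |q_l|\le q_i\ \text{for }l\ne i\}$. Their pairwise intersections lie in hyperplanes of measure zero and can be ignored.

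On each $P_i^{+}$ I would apply the Duffy substitution $q_i=u$, $q_l=u\,s_l$ for $l\ne i$, with $u\in[0,1/2]$ and $\bm s\in[-1,1]^{d-1}$; expanding the Jacobian determinant along the row of $q_i$ (which is $(1,0,\dots,0)$) leaves $\det(u\,I_{d-1})=u^{d-1}$. Splitting each $s_l\in[-1,1]$ according to its sign $p_l\in\{\pm1\}$ and rescaling $s_l=2p_lv_l$ with $v_l\in[0,1/2]$ turns $\int_{-1}^{1}\mathrm d s_l$ into $2\sum_{p_l}\int_0^{1/2}\mathrm d v_l$, producing the sum over $\bm p\in\{\pm1\}^{d-1}$ together with a factor $2^{d-1}$. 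The radial coordinate then occupies position $i$ of the vector $u\,(2p_1v_1,\dots,2p_{d-1}v_{d-1},1)^{T}$, and running $i$ through all $d$ positions is precisely encoded by the cyclic permutations $\sigma^{j}$, $j=0,\dots,d-1$, so that $A^{-T}\bm q=u\bm w$ and $g(\bm q)=f(u\bm w)$. Collecting the factor $2$ from evenness with the factor $2^{d-1}$ from the rescaling yields the prefactor $2^{d}$, and the claim follows.

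The argument is essentially bookkeeping rather than analysis: the only genuine care needed is to confirm that the three independent reductions — the cyclic-permutation encoding of the pyramid index, the sign split into $\bm p$, and the radial rescaling — combine to exactly $2^{d}$ and reproduce the stated $\bm w$, and to note that the measure-zero pyramid boundaries together with the vanishing of the singular part of each $Z_{\Lambda,\nu_i}$ at the origin make the decomposition harmless in the stated regime $\mathrm{Re}(\nu_i)>d$. The same reparametrization is then the one carried over to the meromorphically continued, genuinely singular case, where the factor $u^{d-1}$ supplies the integrability needed for the Hadamard integral.
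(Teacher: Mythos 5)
Your proof is correct and follows essentially the same route as the paper: map the Brillouin zone to the cube via $\bm k = A^{-T}\bm q$ (cancelling $V_\Lambda$), use evenness of the Epstein zeta function, decompose into pyramids indexed by the cyclic permutations $\sigma^j$, and apply the Duffy substitution with Jacobian $u^{d-1}$, collecting the factor $2^d$ from evenness and the sign/rescaling split. The only difference is bookkeeping order --- the paper splits signs and halves the cube before the pyramid decomposition, while you decompose into bipyramids by the maximal-absolute-value coordinate first --- which does not change the substance of the argument.
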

\begin{proof}
    We first use that the Epstein zeta function is an even function in $\bm k$ to reduce the integration to half the Brillouin zone. After the substitution $\tilde {\bm k} = A^T\bm k$ and leaving away the tilde, we then have
    $$
    \zeta_{\Lambda}^{(n)}(\bm \nu)=2 \sum_{\bm p\in \{\pm 1\}^{d - 1} } \int_{[0,1/2]^d} f(A^{-T}(p_1 k_1, \dots ,p_{d-1} k_{d-1}, k_d)^T )\,\mathrm d \bm k,
    $$
    with $f$ defined as above.
     Now separate the integrals over the corner $[0,1/2]^d$ into $d$ integrals over pyramids,
    $$
    \int_{[0,1/2]^d} g(\bm k)\,\mathrm d \bm k =\sum_{j=0}^{d-1} \int_{0}^{1/2} \int_{[0,k_d]^{d-1}} g(\sigma^j (k_1,\dots,k_{d-1},k_d)^T) \,\mathrm d  k_1\dots  d k_{d-1} \,\mathrm d k_d.
    $$
    Finally, applying the Duffy transform
    $$
    k_j = \new{u v_j},\quad {j=1,\dots,d-1}, \quad k_d = u,
    $$
and letting $g(\bm k)=f(A^{-T}(p_1 k_1, \dots ,p_{d-1} k_{d-1}, k_d)^T )$ yields
\[
    \int_{[0,1/2]^d} g(\bm k)\,\mathrm d \bm k =\sum_{j=0}^{d-1} {\int_0^{1/2}}\int_{{\new{[0,1]}^{d-1}}}  {u^{d-1}} f(u\, A^{-T}\sigma^j ( \new{\bm p\bm v}, 1)^T )\,\mathrm d \bm v\,\mathrm d u.
 \qedhere
\]
\end{proof}

\new{In the following, we focus on real exponents $\nu_j$.}
\new{The integral over $\bm v$ in \Cref{duffy} is computed through an $n_{\rm q}$-point Gauss-Legendre quadrature tensorized to $d-1$ dimensions, while the one-dimensional singular integral in $u$, for $\nu_j \ge d$, can be efficiently computed by adaptive integration. The efficient computation of the meromorphic continuation, used if at least one $\nu_j< d$, is discussed at the end of this section.}

\new{We now investigate how the quadrature error scales with the number of quadrature points depending on the condition number of the lattice matrix $A$. To this end, we first show that the integrand in the preceding corollary admits a holomorphic extension with respect to its argument $\bm v$ to a complex neighborhood of the integration domain. The argument relies on a recent result establishing the holomorphic extension of the Epstein zeta function.
We then derive a bound for the largest Bernstein ellipse in each integration dimension, with foci at the endpoints of the integration interval, that is contained in the region of holomorphy. By standard results, this implies exponential convergence of the $L^\infty$-approximation error of $f$ by orthogonal polynomials. Consequently, the quadrature error of a tensor-product $n_{\rm q}$-point Gauss--Legendre rule decays exponentially with respect to the number of quadrature points, see i.e.~\cite[Thm.~5.3.15]{sauter2010boundary}.}

\new{While the holomorphic extension of the Epstein zeta function in its argument $\nu$ had already been discussed by Epstein \cite{epstein1903theorieI,epstein1903theorieII}, the holomorphic extension with respect to its remaining arguments $\bm x$ and $\bm k$ was, until recently, not well understood.
In \cite{buchheit2024epstein}, we show that the Epstein zeta function $Z_{\Lambda,\nu}(\bm x,\bm k)$  admits a jointly holomorphic extension in $(\nu,\bm x,\bm k)$. The following theorem is a weaker version of \cite[Th.~2.13, Th.~2.16]{buchheit2024epstein} that focuses on the holomorphy of the Epstein zeta function and of the regularized Epstein zeta function with respect to the argument $\bm k$ for $\bm x=\bm 0$, which is required for the analysis of the quadrature error. We here replace $\bm k$ by $\bm q \in \mathds C^d$ to signal the holomorphic continuation.
}

\new{
\begin{theorem}
\label{holreg}
Let $\Lambda$ be a $d$-dimensional lattice and $\nu\in\mathds C$. 
Denote by 
$$
D_L=\{\bm q\in\mathds C^d:|\Re{\bm q}-\bm z|>|\Im{\bm q}|\ \forall \bm z\in L\}.
$$
the intersection of $d$-dimensional complex cones with origins at $L\subseteq \mathds R^d$.
Then, $Z_{\Lambda,\nu}(\bm \cdot)$ can be holomorphically extended to $D_{\Lambda^\ast}$
and $Z^{\rm reg}_{\Lambda,\nu}(\bm \cdot)$ 
can be holomorphically extended to 
$D_{\Lambda^*\setminus\{\bm 0\}}$.
\end{theorem}
}
\new{
By applying the Duffy transformation, we transform the $d$-dimensional integral over $\bm k\in \mathrm{BZ}$ with a power-law singularity at $\bm k=\bm 0$ into a sum of one-dimensional integrals in $u$, each with power-law singularity at $u=0$, and $(d-1)$-dimensional smooth integrals over $\bm v\in [0,1]^{d-1}$.
The following corollary leverages \Cref{holreg} to show that the integrand in $\bm v$ is not only smooth on the integration domain $[0,1]^{d-1}$, but even extends to a holomorphic function in a complex neighborhood of $\bm v \in \mathds R^{d-1}$.
}

\new{
\begin{corollary}
\label{hol-integrand}
Let $0<u<1/2$ be fixed. Then the function $f(u\bm w(\bm v))$, as defined in \Cref{duffy}, admits a holomorphic extension in $\bm v$ to the domain
\[
\Omega=\left\{\bm v\in \mathds C^{d-1} : |\Im{\bm v}| < 1/\kappa(A)\right\}.
\]
Here, $\kappa(A)=\sigma_\mathrm{max}/\sigma_\mathrm{min}$ is the condition number of $A$ with
$\sigma_{\rm min}$ and $\sigma_{\rm max}$ denoting the smallest and largest singular values of $A$.
\end{corollary}
}
\begin{proof}
\new{Recall that \(f\) is a product of Epstein zeta functions and that
\[
\bm w(\bm v)
=
A^{-T}\bigl(\sigma^j(p_1v_1,\dots,p_{d-1}v_{d-1},1)^T\bigr),
\]
where \(\sigma\) denotes a cyclic permutation and \(p_i\in\{\pm1\}\). We begin with the holomorphic extension of \(f\) itself and then incorporate, step by step, the permutation, sign changes, rescaling, and dimension reduction.}

\new{The Epstein zeta function $Z_{\Lambda,\nu_i}$ extends holomorphically to $D_{\Lambda^*}$ by \Cref{holreg}. Thus, $f$ extends to a holomorphic function on $D_{\Lambda^*}$
as the product of holomorphic functions and,
by the composition theorem,
$f(A^{-T} \bm q)$ is holomorphic on 
$\bm q\in 
A^TD_{\Lambda^*}$. As $\Lambda^\ast=A^{-T} \mathds Z^d$, we find after inserting in \Cref{holreg} that
\[
A^T D_{\Lambda^*}=\{\bm q\in\mathds C^d:|A^{-T}(\Re{\bm q}-\bm z)|>|A^{-T}\Im{\bm q}|\ \forall \bm z\in \mathds Z^d\}.
\]}
\new{Consider the simplified set
\[
K
=
\{\bm q\in\mathds C^d:\ |\Re{\bm q}-\bm z|>\kappa(A)|\Im{\bm q}|
\quad \forall \bm z\in\mathds Z^d\},
\]
which depends only on the condition number \(\kappa(A)\). We show that $K\subset A^T D_{\Lambda^\ast}$.
For \(\bm q\in K\) and \(\bm z\in\mathds Z^d\), the singular-value bounds
\[
|A^{-T}\bm k|\ge \sigma_{\min}(A^{-T})|\bm k|,
\qquad
|A^{-T}\bm k|\le \sigma_{\max}(A^{-T})|\bm k|,
\qquad \bm k\in\mathds R^d,
\]
together with
$
\sigma_{\min}(A^{-T})=1/\sigma_{\max}(A)$, and
$
\sigma_{\max}(A^{-T})=1/\sigma_{\min}(A),
$
yield
\begin{align*}
|A^{-T}(\Re{\bm q}-\bm z)|
\ge \frac{1}{\sigma_{\max}(A)}|\Re{\bm q}-\bm z|
> \frac{\kappa(A)}{\sigma_{\max}(A)}|\Im{\bm q}|
&= \frac{1}{\sigma_{\min}(A)}|\Im{\bm q}|\\
&\ge |A^{-T}\Im{\bm q}|.
\end{align*}
Hence, we obtain \(\bm q\in A^T D_{\Lambda^\ast}\), proving that \(K\subset A^T D_{\Lambda^\ast}\). Therefore, \(f(A^{-T}\bm q)\) is holomorphic on \(\bm q\in K\).
Permutations and sign change can further be included via the signed permutation matrix \[Q_{j,\bm p}=\sigma^j \mathrm{diag}(p_1,\dots,p_{d-1},1),\] with $p_i= \pm 1$. Then $f(A^{-T} Q_{j,\bm p}\bm q)$ is also holomorphic on $K$, as the set inherits invariance under entry-wise sign changes and permutations of vector entries from the relation $Q_{j,\bm p}\mathds Z^d =\mathds Z^d$. Moreover, for $0<u<1/2$, we have that $f(u A^{-T} Q_{j,\bm p}\bm q)$ is holomorphic on $K/u$.}

\new{Finally, we transfer this result to holomorphy of $f(u\bm w(\bm v))$ in $\bm v\in\mathds C^{d-1}$. By the preceding argument, $f(u\bm w(\bm v))$ is holomorphic in $\bm v$ on
\[
\left\{
\bm v\in\mathds C^{d-1}:
\left|\Re{u(\bm v,1)^T}-\bm z\right|
>
\kappa(A)\left|\Im{u(\bm v,1)^T}\right|
\quad \forall \bm z\in\mathds Z^d
\right\}.
\]
Since $u>0$, squaring the inequality and separating the last component shows that the domain is equal to
\[
\left\{
\bm v\in\mathds C^{d-1}:
\left|\Re{\bm v}-\tilde{\bm z}/u\right|^2
+
\left(1-z_d/u\right)^2
>
\kappa(A)^2 |\Im{\bm v}|^2
\quad
\forall\, \tilde{\bm z}\in\mathds Z^{d-1},\ z_d\in\mathds Z
\right\}.
\]
We now have that $\left|\Re{\bm v}-\tilde{\bm z}/{u}\right|^2 \ge 0$,
and, since $0<u<1/2$,
\[
\left(1-z_d/u\right)^2 \ge 1
\qquad \forall\, z_d\in\mathds Z,
\]
where the bound is reached at $z_d=0$.
Therefore, the above domain contains
\[
\Omega=\left\{
\bm v\in\mathds C^{d-1}:
|\Im{\bm v}|<1/\kappa(A)
\right\}.
\]
on which $f(u\bm w(\bm v))$ is therefore holomorphic as stated.}
\end{proof}
\new{In the next step, we determine the largest Bernstein ellipse contained in the domain of holomorphy, beginning with its definition, see i.e. \cite[Chap.~5.3.2.2]{sauter2010boundary}.}%
\new{\begin{definition}
For $\rho>1$, we define the Bernstein ellipse $E_\rho$ with foci at $0$ and $1$ as
\[
E_{\rho}
=
\left\{
v\in\mathbb{C} :
v=\frac12+\frac{\xi+\xi^{-1}}{4},
\ \xi\in\mathbb{C},\ |\xi|=\rho
\right\},
\]
where the semi-major and semi-minor axes have lengths $(\rho+\rho^{-1})/4$ and $(\rho-\rho^{-1})/4$. Equivalently, $\rho$ is the sum of these lengths normalized by the radius $1/2$ of the interval $[0,1]$.
\end{definition}}%
\new{Here, we have slightly modified the notation from \cite{sauter2010boundary} by rescaling $\rho$ with the interval radius, which simplifies the subsequent error discussion. We then determine the largest Bernstein ellipse contained in the domain of holomorphy.}
\new{\begin{corollary}
\label{hol-bernstein}
Let $0<u<1/2$ be fixed and $d>1$.
Then
$f(u\bm w(\bm v))$ as in \Cref{duffy} extends to a holomorphic function in $\bm v$ on
$E_{\rho}^{d-1}=E_{\rho}\times\ldots\times E_{\rho}$
for all $1<\rho<\rho_\mathrm{bound}$,
where
\[
\rho_\mathrm{bound}=a+\sqrt{1+a^2},\quad a=\frac{2}{\kappa(A)\sqrt{d-1}}.
\]
\end{corollary}
\begin{proof}
By \Cref{hol-integrand}, $f(u \bm w(\bm v))$ extends holomorphically to $\bm v\in \Omega$. It therefore suffices to show that $E_\rho^{d-1}\subset \Omega$. We have 
\[
\vert \Im{\bm v}\vert \le \sqrt{d-1} \max_{1\le i\le d-1 } \vert \Im{v_i}\vert \le  \sqrt{d-1} \frac{\rho-\rho^{-1}}{4},
\]
as the semi-minor axis of $E_\rho$ has length $(\rho-\rho^{-1})/4$. From this follows the sufficient condition
\[
\rho-\rho^{-1} < \frac{4}{\kappa(A)\sqrt{d-1}}=2a.
\]
As $\rho>1$, this is equivalent to $\rho<a+\sqrt{1+a^2}$.
\end{proof}}

\new{
It follows that the error of a tensorized $n_{\rm q}$-point Gauss--Legendre quadrature rule applied to the $(d-1)$-dimensional integral
\[
I=\int_{[0,1]^{d-1}}   f(u \bm w (\bm v))\,\mathrm d \bm v
\]
 falls of geometrically as $\rho^{-2n_{ q}}$
for any $1<\rho<\rho_\mathrm{bound}$ for $\rho_\mathrm{bound}$ as in \Cref{hol-bernstein}, see for instance
\cite[Th. 5.3.15]{sauter2010boundary}.}

\new{We briefly comment on the structure of the one-dimensional integral in $u$ for $\nu_i\ge d$, which makes the effectiveness of a standard adaptive integration strategy clear, and then move on to the case of the meromorphic continuation. After separating the Epstein zeta function into its singularity and the analytic remainder and inserting this into the representation after Duffy transformation from \Cref{duffy}, the integral reads
\[ 
    \int_{0}^{1/2} u^{d-1}  f(u \bm w) \,\mathrm d u =  \int_{0}^{1/2} u^{d-1} \prod_{j=1}^n \left(c_{\nu_j} u^{\nu_j-d} \frac{\vert \bm w \vert^{\nu_j-d}}{\vert \det{A} \vert} + Z^\mathrm{reg}_{\Lambda,\nu_j}(u\bm w) \right)  \,\mathrm d u,
\]
where $u^{\nu_j-d}$ is replaced by $u^{\nu_j-d}\log(\pi u^2\bm w^2)$ for $\nu_j \in d+2\mathds N_0$. By \Cref{holreg}, the regularized Epstein zeta function is holomorphic in neighborhood of the origin and therefore $Z^\mathrm{reg}_{\Lambda,\nu_j}(u\bm w)$ is holomorphic in an environment of $u\in [0,1/2]$. After multiplying out the integrand, the integral can be separated into a sum of of one-dimensional integrals  of the form
\[
\int_{0}^{1/2}u^{\mu-1} \log(\pi u^2\bm w^2)^m g(u)\,\mathrm d u, \quad m\in \mathds N_0,
\]
with $g$ holomorphic in an environment of the integration domain and $\mu\ge d$. Note that higher powers of the logarithm can appear if multiple exponents $\nu_j \in d+2\mathds N_0$.
Each integrand is therefore the product of a holomorphic function in a neighborhood of the integration interval and an algebraic and/or logarithmic singularity at $u=0$. We therefore evaluate the integral using standard adaptive quadrature with recursive interval subdivision, which automatically concentrates subintervals near $u=0$ and thereby resolves all endpoint singularities simultaneously, see, i.e., \cite{piessens2012quadpack} and references therein. We note here that higher powers of the logarithm do not lead to a noticeable increase in the number of required bisections, due to the mildness of the singularity.
}

\new{The case of the meromorphic continuation, used conservatively if at least one $\nu_j < d$, is treated as follows.} We first divide the integration interval in $u$ into  $(0,\varepsilon)$ and $(\varepsilon,1/2)$. An adaptive integrator is then used to handle the second integral, \new{while the first is computed analytically, as discussed in the following}. For the meromorphic continuation of the integral over $(0,\varepsilon)$, we expand the regularized Epstein zeta function in an analytic Taylor series to sufficiently high order, yielding
\[
Z^\mathrm{reg}_{\Lambda,\nu_j}(u\bm w) = \sum_{k=0}^\ell u^{2k} C_{k,\nu_j}(\bm w) +\mathcal O (u^{2(\ell+1)}),
\]
\new{using that the regularized Epstein zeta function is even, with the derivatives}
\new{\[C_{k,\nu}(\bm w) =\frac{1}{(2k)!}(\bm w\cdot \nabla)^{2k} Z^{\mathrm{reg}}_{\Lambda,\nu}(\bm 0).
\]}

\new{
From \Cref{holreg}, we know that the regularized Epstein zeta function $Z^\mathrm{reg}_{\Lambda,\nu}$ extends to a holomorphic function around the origin with radius of convergence \[r=\mathrm{dist}(\bm 0,\Lambda^\ast\setminus\{\bm 0\})\ge \frac{1}{\sigma_\mathrm{max}(A)}.\]  On every compact subset of $B_r(\bm 0)$, the truncated Taylor series about the origin then converges uniformly to $Z^\mathrm{reg}_{\Lambda,\nu}$, and the truncation error decays exponentially in $\ell$. Here, we choose the splitting parameter $\varepsilon$ sufficiently large, such that cancellation error between the two contributions is sufficiently limited, and small enough, such that the uniform error of the Taylor series on $[0,\varepsilon]$ is smaller than machine precision.
}
\new{For $\mu>0$, the integrals are finally evaluated analytically as}
\new{\begin{align*}
\int_{0}^\varepsilon u^{\mu-1} &=\frac{\varepsilon^{\mu}}{\mu},\\ 
\int_{0}^\varepsilon u^{\mu-1} \log(\pi u^2{\bm w}^2) \,\mathrm d u &= -\epsilon^{\mu} \frac{2-\mu\log(\pi \varepsilon^2{\bm w}^2)}{\mu^2},\\
\int_{0}^\varepsilon u^{\mu-1} \log(\pi u^2{\bm w}^2)^2 \,\mathrm d u &= \epsilon^{\mu} \frac{8-\mu\log(\pi \varepsilon^2{\bm w}^2)(4 -\mu\log(\pi \varepsilon^2{\bm w}^2))}{\mu^3},\\
\int_{0}^\varepsilon u^{\mu-1} \log(\pi u^2{\bm w}^2)^3 \,\mathrm d u &= -\epsilon^{\mu} \frac{48 -\mu \log(\pi \epsilon^2{\bm w}^2) \big(24 -\mu\log(\pi \epsilon^2{\bm w}^2)(6 -\mu \log(\pi \epsilon^2{\bm w}^2))\big)}{\mu^4}
\end{align*}
and so forth, where the right-hand side forms the meromorphic continuation to $\mu\in \mathds C\setminus\{0\}$.}
\new{
Here, the powers of logarithm that appear when expanding the product for $\nu\in d+2\mathds N_0$ do not introduce an additional error in the quadrature in $\bm v$, as they are holomorphic as a function of $\bm v$ in the same holomorphy region as the full integrand in \Cref{hol-integrand} and \Cref{hol-bernstein}.
}
We precompute the partial derivatives of the Epstein zeta function up to order $\ell$ analytically using  Crandall's formula in \cite{buchheit2024computation,buchheit2024epstein} and obtain the directional derivatives via
\[
(\bm w\cdot \nabla)^{2k}Z^{\mathrm{reg}}_{\Lambda,\nu_j}(\bm 0) = \sum_{\substack{\vert \bm \alpha\vert =2k}} \binom{2k
}{\bm \alpha} {\bm w}^{\bm \alpha} { \nabla}^{\bm \alpha} Z^{\mathrm{reg}}_{\Lambda,\nu_j}(\bm 0)
\]
with the multinomial coefficient
\[
\binom{\ell 
}{\bm \alpha} = \frac{\ell!}{\alpha_1! \alpha_2! \dots \alpha_d !}.
\]

\new{Finally, given $\mu\in \mathds R$ and an even function $g$ with radius of analyticity around zero larger than $\varepsilon$, the Hadamard integral on $(0,\varepsilon)$ is evaluated as
\[
\ddashint_{0}^\varepsilon u^{\mu-1} g(u) = \sum_{k=0}^\ell \frac{\varepsilon^{\mu+2k}}{\mu+2k}\frac{g^{(2k)}(0)}{(2k)!} + \mathcal{O}(\varepsilon^{\mu+2(\ell+1)}),
\]
for $\mu+2k\neq 0$ for $k=0,\dots,\ell+1$. For a discussion of the logarithmic corrections in case that $\mu+2k=0$ arises, see \cite{buchheit2022singular,gelfand1964generalizedI}.
The error thus decreases with $\varepsilon$ under the condition
\[
\mu+2(\ell+1)>0,
\]
which is equivalent to
\[
\ell\ge \left\lfloor -\mu/2 \right\rfloor,
\]
with $\lfloor x \rfloor$ the greatest integer less than or equal to $x\in \mathds R$. This relation determines the lower bound for the truncation parameter $\ell$. As $0<\varepsilon \le 1/2$, the error decreases exponentially in $\ell$.
}


\section{Numerical results}
\label{sec:benchmarks}
In this section, we \new{first} analyze the precision of our approach for computing many-body zeta functions 
\new{with respect to the discretization parameters, namely the number of quadrature points and the truncation parameter of the Taylor series.
We subsequently verify our method for a wide range of arguments such as the number of bodies, multiple dimensions, exponents and geometries. Here,} we adopt the following test strategy. We first verify our approach for one- and two-body zeta functions, where we can determine analytical benchmarks. We subsequently compare our approach against direct summation for particular cases such as \new{low spatial dimension $d$} or large values of $\nu_i$, where a sufficiently precise reference value can still be obtained. We finally study the behavior of the $n$-body zeta function as $n$, and thus the sum dimension increases and analyze the runtime and precision of our method in this case.

\new{
\subsection{Error scaling with numerical discretization parameters}
}

\new{
The main numerical discretization parameter that determines the precision of our method is the number of quadrature points $n_{\rm q}$ per dimension in the tensorized Gauss-Legendre rule for evaluating the integral in $\bm v$ in \Cref{duffy}.
In addition, in case of the meromorphic continuation,
the cutoff parameter of the Taylor series $\ell$ needs to be considered.
At the end of the preceding section, we haved demonstrated geometric convergence of the method in these discretization parameters, which we verify numerically in what follows.}

\new{Beyond this, there are implementation choices of lesser importance. One concerns the adaptive integrator for the one-dimensional integral in $u$. Due to the standard structure of this integral, discussed in the previous section, this part is handled robustly by standard numerical integration routines, see, i.e., \cite{piessens2012quadpack}. A second choice arises in the meromorphic continuation, which is used whenever at least one $\nu_j < d$, namely the splitting parameter $\varepsilon$. Its choice, however, is closely connected to the Taylor cutoff $\ell$. The Taylor expansion must approximate the integrand uniformly to full precision on $[0,\varepsilon]$, while $\varepsilon$ should at the same time be large enough to limit cancellation errors between the two resulting integral contributions. In the numerical experiments below, we use $\varepsilon = 10^{-2}$, which proved adequate in all cases considered.
}

\begin{figure}
\centering\includegraphics[width=0.65\linewidth]{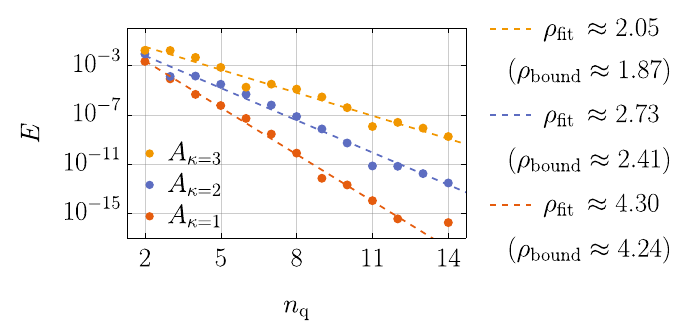}
    \caption{\new{Minimum of absolute and relative error of the three-body zeta function $\zeta^{(3)}_{A_{\kappa} \mathds Z^2}(1,3,5)$ with the lattice matrix given in \eqref{eq:A_kappa} and condition numbers $\kappa\in\{1,2,3\}$ (red, blue, yellow points) as a function of the number of quadrature points per dimension of a tensorized Gauss-Legendre rule in $\bm v$, where the value for $n_{\rm q}=20$ is taken as the reference. Geometric convergence in the number of quadrature points $n_{\rm q}$ is observed.
    The dashed lines display the fitted function $C\rho_{\rm fit}^{-2 n_{\rm q}}$. The corresponding values of the values of $\rho_\mathrm{fit}$ and the analytic bound $\rho_\mathrm{bound}$ are provided on the right.}
    }
    \label{fig:gauss-cond}
\end{figure}

\new{
As our measure of error $E$, we use the minimum of the absolute and relative error,
\[
E=\min(E_\mathrm{abs},E_\mathrm{rel}).
\]
We first study how the error decays with the number of quadrature points $n_{\rm q}$ per dimension in the $\bm v$-integral, and how this decay depends on the condition number of $A$. To this end, we consider a two-dimensional lattice matrix $A_\kappa$ 
\begin{equation}
\label{eq:A_kappa}
A_{\kappa}=
\begin{pmatrix}
1 & (\kappa - \kappa^{-1})/2 \\
0 & (\kappa+ \kappa^{-1})/2
\end{pmatrix},
\end{equation}
where the parametrization is chosen such that $\kappa \ge 1$ coincides with the condition number of the matrix. Here, $\kappa = 1$ reduces to the case of a two-dimensional square lattice. 
}

\new{
In \Cref{fig:gauss-cond} we display the minimum of absolute and relative error of the three-body zeta function 
$\zeta^{(3)}_{A_\kappa \mathds Z^2}(1,3,5)$ for $\kappa = 1,2,3$ (red, blue, yellow points)
as a function of the number of Gauss-Legendre quadrature points $n_{\rm q}$ per dimension. Here, we choose  $n_{\rm q}=20$ as our reference.
We observe geometric convergence   in the number of quadrature points as $\rho_\mathrm{fit}^{-2n_{\rm q}}$ with $\rho_\mathrm{fit}$ similar ($\kappa=1$) or slightly larger ($\kappa=2,3$) than the analytic bound for the rate $\rho_\mathrm{bound}$ obtained from \Cref{hol-bernstein}.
}

\new{
In \Cref{fig:gauss}
we investigate the error scaling for the three-body zeta function $\zeta^{(3)}_{\Lambda}(1,3,5)$ as a function of the number of quadrature points $n_{\rm q}$ for physically relevant lattices in two and three dimensions. In panel (a), we display the error for the two-dimensional  
 square lattice (blue) with 
$\Lambda_{\rm sq}=\mathds Z^2$ and
the hexagonal lattice (red)
$\Lambda_{\rm hex}=A_{\rm hex}\mathds Z^2$
with lattice matrix
$$
A_{\rm hex}=
\begin{pmatrix}
    1&  1/2 \\ 0 & \sqrt3/2
\end{pmatrix}.
$$
In panel (b), we display the equivalent error for the three-dimensional face-centered-cuboid lattice $\Lambda_{\rm fcc}=A_{\rm fcc}\mathds Z^3$ (red)
and the  body-centered-cuboid lattice $\Lambda_{\rm bcc}=A_{\rm bcc}\mathds Z^3$ (blue), given by the lattice matrices \cite{roblesnavarro2025exact}
$$
A_{\rm fcc}=
\frac{1}{\sqrt{2}}
\begin{pmatrix}
1 & 1 & 0 \\ 
1 & 0 & 1 \\ 
0 & 1 & 1
\end{pmatrix}
,\qquad
A_{\rm bcc}=
\frac{1}{\sqrt{3}}
\begin{pmatrix}
1 & 1 & 0 \\ 
\sqrt{2} & 0 & \sqrt{2} \\ 
0 & \sqrt{2} &\sqrt{2}
\end{pmatrix}.
$$
We observe an error $E\le 2\cdot 10^{-14}$ for $n_{\rm q}=14$ quadrature points, and adopt this value in all subsequent benchmarks.
}

\begin{figure}
    \centering
    \includegraphics[width=0.74\linewidth]{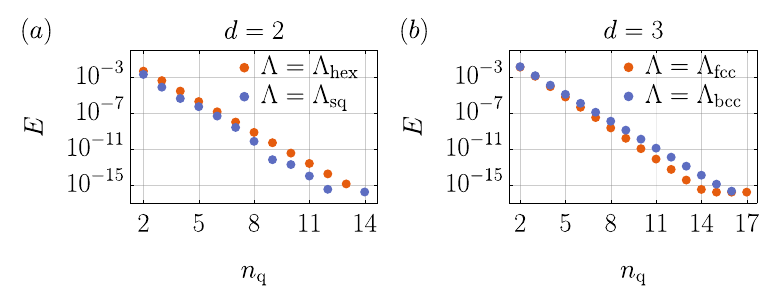}
    \caption{\new{Minimum of absolute and relative error of the three-body zeta function $\zeta^{(3)}_\Lambda(1,3,5)$ for $\Lambda\in\{\Lambda_{\rm sq},\Lambda_{\rm hex}\}$ in $d=2$ dimensions in (a) and $\Lambda\in\{\Lambda_{\rm fcc},\Lambda_{\rm bcc}\}$ in $d=3$ dimensions in (b) as a function of the number of quadrature points of the Gauss-Legendre quadrature per dimension in the integral in $\bm v$, where the numerical result for $n_{\rm q}=20$ is taken as the reference.
    We observe exponential convergence in the number of quadrature points and achieve a precision of $E \le 2\cdot10^{-14}$ with $n_{\rm q}= 14$ for all test cases.}}
    \label{fig:gauss}
\end{figure}

\new{
We finally analyze the error for the meromorphic continuation, which we use if at least one $\nu_j<d$. In \Cref{tab:taylor}, we display the error of the three-body zeta function as function of the truncation parameter $\ell$ of the Taylor series  for the square lattice and the hexagonal lattice  for representative exponents $(1,3,5)^T$ and $(-1,5,5)^T$ choosing $\varepsilon = 10^{-2}$. This corresponds to computing a Taylor series of order $2\ell$, as the function is even. We observe an exponential decay of the error in $\ell$. The error is stably below machine precision for $\ell\ge 3$, and we conservatively use $\ell=4$ in the following. Note that this choice respects the bound derived in the preceding section for all following examples, where $\nu_j\ge -3$.}

\begin{table}[ht]
\centering
\begin{tabular}{ccccccc}
\toprule
$\Lambda$ & $\nu$ 
& $\ell = 0$ 
& $\ell = 1$ 
& $\ell = 2$ 
& $\ell = 3$ \\

\midrule\multirow{3}{*}{$\Lambda_{\rm sq}$}
& $(1,3,5)$ &$8.34\cdot 10^{-5}$ & $1.08\cdot 10^{-8}$ & $1.57\cdot 10^{-13}$ & $0$ \\


& $(-1,5,5)$ &$4.23\cdot 10^{-2}$ & $1.95\cdot 10^{-6}$ & $2.00\cdot 10^{-11}$ & $1.42\cdot 10^{-15}$ \\

\midrule\multirow{3}{*}{$\Lambda_{\rm hex}$}
& $(1,3,5)$ &$9.19\cdot 10^{-5}$ & $1.38\cdot 10^{-8}$ & $4.96\cdot 10^{-14}$ & $0$ \\


& $(-1,5,5)$ &$4.25\cdot 10^{-2}$ & $2.06\cdot 10^{-6}$ & $4.88\cdot 10^{-12}$ & $1.67\cdot 10^{-15}$ \\
  
\bottomrule
\end{tabular}
    \caption{\new{Minimum of absolute and relative error of the three-body zeta function $\zeta^{(3)}_\Lambda(1,3,5)$ and $\zeta^{(3)}_\Lambda(-1,5,5)$ for $\Lambda\in\{\Lambda_{\rm sq},\Lambda_{\rm hex}\}$ in $d=2$ dimensions as a function of the Taylor-series cutoff $\ell$, corresponding to an order $2\ell$ expansion, with $\varepsilon=10^{-2}$, where the value for $\ell=6$ is taken as a reference.
    We observe exponential convergence in $\ell$. }
    }
\label{tab:taylor}
\end{table}

\subsection{One- and two-body zeta functions}

\begin{figure}
    \centering
    \includegraphics[width=1.
\linewidth]{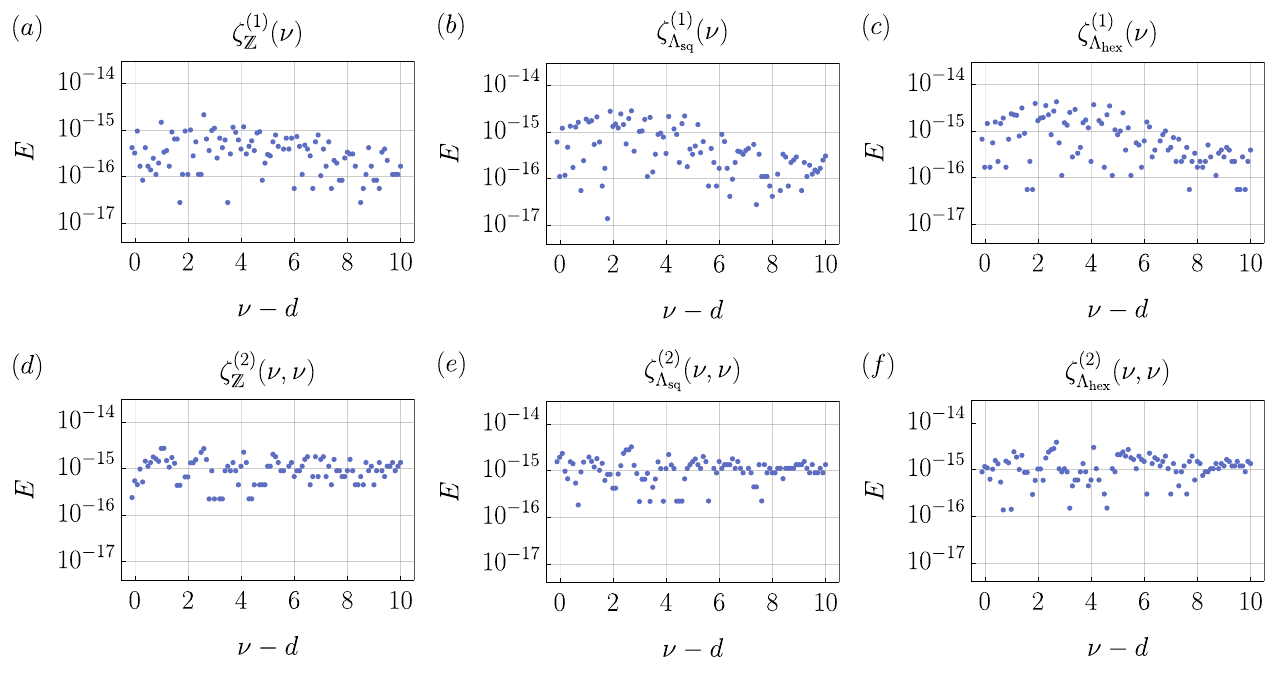}
    \caption{Minimum of absolute and relative error $E$ of the many-body zeta function $\zeta^{(n)}_\Lambda(\bm \nu)$ for $n\in\{1,2\}$ for different one- and two dimensional lattices $\Lambda\in\{\mathds Z,\Lambda_{\rm sq},\Lambda_{\rm hex}\}$.}
    \label{fig:OneTwoBodyZetaError}
\end{figure}

We benchmark our numerical implementation of the many-body zeta function
 $\zeta^{(n)}_\Lambda(\bm \nu)$ for $n=1$ and $n=2$ for the integer lattice $\Lambda=\mathds Z$, \new{the square lattice $\Lambda_{\rm sq}=\mathds Z^2$, and the hexagonal lattice $\Lambda_{\rm hex}=A_\mathrm{hex}\mathds Z^2$}.
Reliable reference values are obtained from Corollary \ref{OneTwoBodyZetaFunction}, as the one-body zeta function vanishes whereas the two-body zeta function can be written in terms of the efficiently computable Epstein zeta function. 

The resulting error is displayed as a function of $\nu$ in the range $\nu-d\in (0,10)$ in increments of $\delta \nu=1/10$  in \Cref{fig:OneTwoBodyZetaError}. We find that for both one- and two-body zeta functions and in spatial dimensions one and two, full precision is reached against the reference values obtained from EpsteinLib. In particular, the precision of our method remains constant when increasing either $n$ or $d$.

\subsection{Three-body zeta functions, meromorphic continuation, and ATM lattice sums}

\begin{figure}
    \centering
    \includegraphics[width=1.\linewidth]{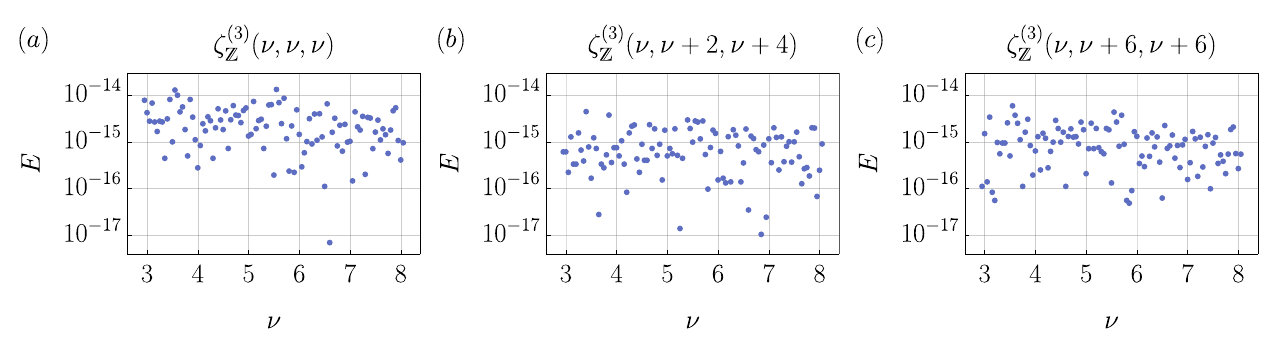}
    \caption{\new{Minimum of absolute and relative error $E$ of the three-body zeta function $\zeta^{(3)}_{\Lambda}(\bm \nu)$ on the integer lattice $\Lambda=\mathds Z$ by comparision with exact summation for sufficiently large $\bm \nu$.}}
    \label{fig:ThreeBodyErrDirect1D}
\end{figure}

For sufficiently large $\nu_i$, the three-body zeta function can be directly calculated by \Cref{multi-body-zeta} through direct summation over 
$\bm x^{(1)},\ldots,\bm x^{(n-1)}\in A\{-L,\cdots,L\}^d$ for some truncation value $L\in\mathds N$.
We compare the one-dimensional three-body zeta function  $\zeta^{(3)}_{\mathds Z}(\bm \nu)$ obtained by the integral representation with the value obtained by direct summation  for $\min \nu_i\ge 3$ and truncation value $L=1000$ in increments of $\delta \nu_i=1/20$ in \Cref{fig:ThreeBodyErrDirect1D}. Full precision is obtained over the whole parameter range. 
Direct summation for the cohesive energy of the ATM potential for the integer lattice $\Lambda=\mathds Z$ in \Cref{def:atm} for a truncation value of $L=5000$ yields $E_\mathrm{coh}^{(3)}=-0.2723018495076886$ which is in excellent agreement with the value obtained by the Epstein zeta method, yielding $E_\mathrm{coh}^{(3)}=-0.2723018495076887$.

While direct summation no longer gives an accurate reference for $\Lambda=\mathds Z$ and $\min\nu_i<3$, we may still obtain a reliable reference in one dimension using analytic high-order series expansions of the Epstein zeta function. To this end, we utilize \cite[Theorems 5.5, 5.6]{buchheit2022singular}, which allow us to write down the Taylor series of the regularized Epstein zeta function in terms of Epstein zeta functions with a shift in the argument $\nu$. The resulting expansion reads
\[
Z_{\mathds Z,\nu}( k) = \hat s_{\nu}(k)+\sum_{n=0}^\infty \frac{1}{(2k)!}\frac{k^{2n}}{(2\pi i)^{2n}} Z_{\mathds Z,\nu-2n}(0),\quad \nu\not \in 1+2\mathds N_0,
\]
where 
$Z_{\mathds Z,\nu}(0) = 2\zeta(\nu)$. After truncating the series expansion on the right-hand side at sufficiently large order, the integrals in Theorem~\ref{thm:epstein_representation} in $k$ can be evaluated analytically, providing us with the desired reference values.
The error and the function values of the one dimensional three body function
$\zeta^{(3)}_{\mathds Z}(\bm \nu)$ for steps $-2\le\nu\le 3$ in increments of $\delta\nu_i=1/20$ are shown in \Cref{fig:ThreeBodyMero}, where an offset of $1/50$ is added to the values of $\nu$, in order to avoid singularities of the many-body zeta function. Note that in contrast to the Epstein zeta function $Z_{\Lambda,\nu}(\bm k)$, which can only exhibit a simple pole in $\nu$ at $\nu=d$ for $\bm k =0$, the many-body zeta function can exhibit multiple poles in $\nu_i$ as can be seen in Fig.~\ref{fig:ThreeBodyMero} (a), (b).

Our results show that the meromorphic continuation can be reliably computed for moderately small $\nu$. In particular, the $\bm \nu$ values $(-1,5,5)^T$ and $(1,3,5)^T$ used in the computation of the ATM potential can be precisely evaluated. An unavoidable loss of precision occurs at very small values of $\nu_i$.

\begin{figure}
    \centering
    \includegraphics[width=1.\linewidth]{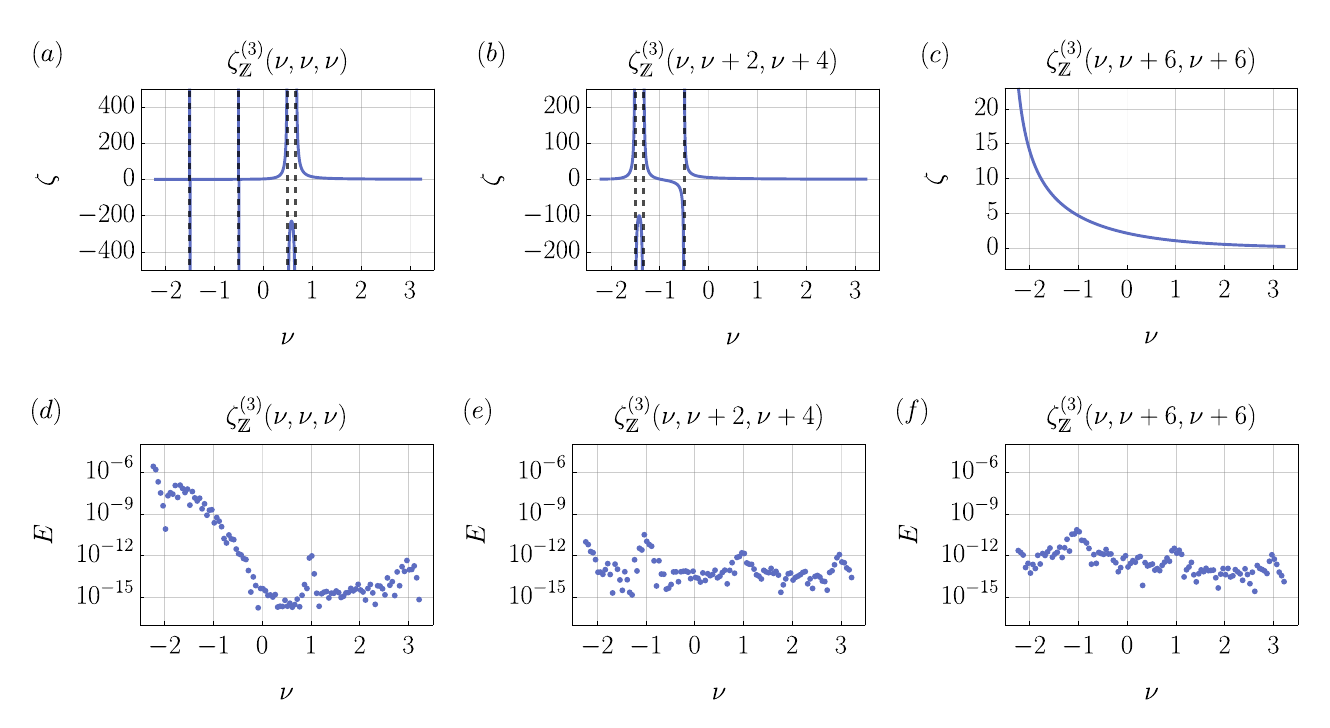}
    \caption{\new{(a)--(c) Function values of the meromorphic continuation of the three-body zeta function for $\Lambda=\mathds Z$ where the dashed black lines indicate the singularities. (d)--(f) Error values of the meromorphic continuation shown in (a)--(c). 
    (d) The error is magnified for small values of $\nu_1+\nu_2+\nu_3$.}}
\label{fig:ThreeBodyMero}
\end{figure}

As a two-dimensional benchmark of the three-body zeta function, we compare 
our two-dimensional three-body zeta function  $\zeta^{(3)}_{\Lambda}(\bm \nu)$ for $\Lambda\in\{\Lambda_{\rm sq},\Lambda_{\rm hex}\}$ to the value obtained by direct summation for $\min \nu_i\ge 5$ and truncation value $L=60$ again for increments of $\delta \nu_i=1/20$ in \Cref{fig:ThreeBodyErrDirect2D}.
Full precision is reached across the whole parameter range. In particular, all three-body zeta functions required for the ATM lattice sum can be precisely computed.
In two dimensions, direct summation no longer yields full precision for the cohesive energy within a reasonable timeframe. For the square lattice
$\Lambda = \Lambda_{\rm sq}$ with a truncation at $L = 250$, over $10^{10}$ summands are required to obtain $E_\mathrm{coh}^{(3)}=0.7700936511489661$, taking approximately 6 hours on a single core. In contrast, the Epstein zeta representation yields $E_\mathrm{coh}^{(3)}=0.7700936505168068$ within seconds, which agrees well with the high-precision result $E_{\mathrm{coh}}^{(3)}=0.77009365051710454$, obtained by direct summation for a truncation at $L=1600$ in Ref.~\cite{roblesnavarro2025exact}.

\begin{figure}
    \centering
    \includegraphics[width=1.\linewidth]{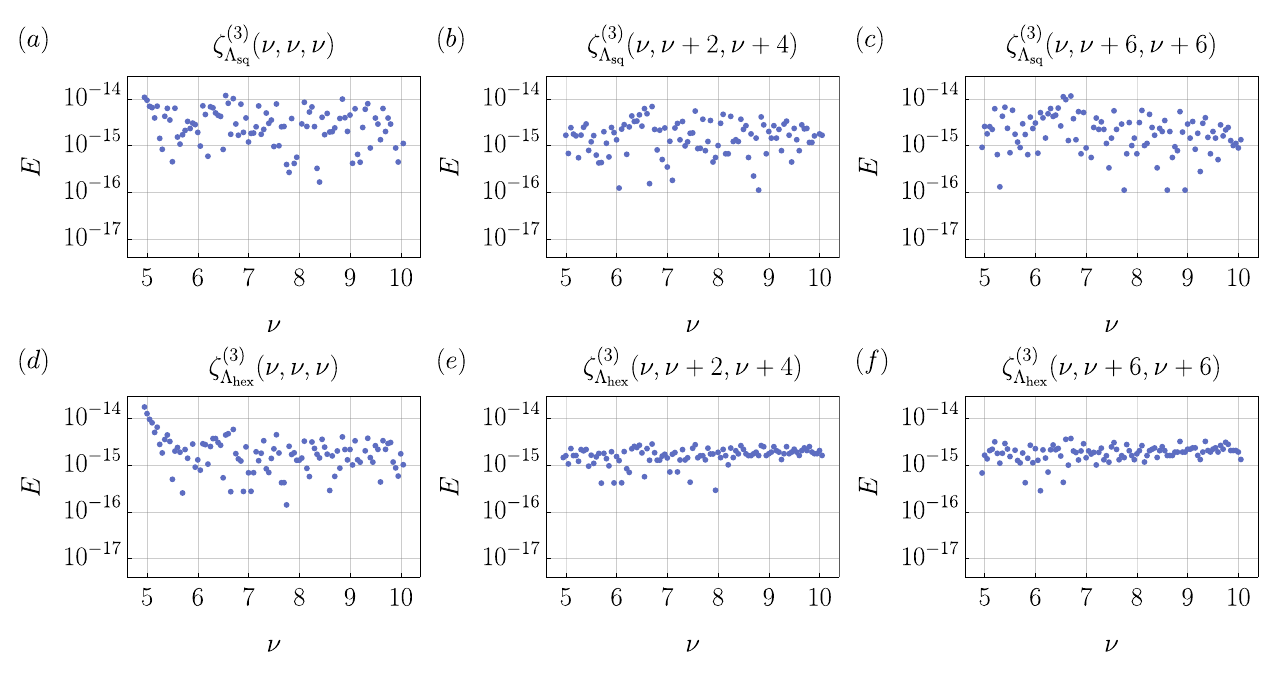}
    \caption{Minimum of absolute and relative error $E$ of the three-body zeta function $\zeta^{(3)}_{\Lambda}(\bm \nu)$  by comparison with exact summation for sufficiently large $\nu_i$ for $\Lambda=\Lambda_{\rm sq}$ in (a)--(c) and for the hexagonal lattice $\Lambda=\Lambda_{\rm hex}$ in (d)--(f).}
    \label{fig:ThreeBodyErrDirect2D}
\end{figure}

\subsection{Many-body zeta functions}

Finally, we study the many-body zeta function for large values of $n$. The arising lattice sums are of dimension $(n-1)d$ and quickly become numerically intractable by direct summation or other methods as $n$ increases. 
However, they remain highly relevant for the prediction of material properties. For the case of general $n$ and $\nu_i=3$, the resulting many-body zeta functions correspond to the isotropic part of the $n$-body cohesive energy obtained from a Drude model, see \cite[Eq.~(6)]{schwerdtfeger2016towards}.
Sums of this kind are of high interest in the perturbative treatment of quantum many-body systems, where in the past, elaborate Monte Carlo methods have been used to tackle moderately high $n$ with direct applications to quantum spin systems \cite{adelhardt2024monte}.

In the following, we test our method for $n=4$ and $n=5$ against large direct sums for $\zeta^{(n)}(\bm \nu)$ at $\bm \nu =(\nu,\ldots,\nu)^T$ for sufficiently large values of $\nu\gg d$, where the sums can be truncated at close distance to the origin. The results as a function of $\nu$ are displayed in \Cref{fig:ThreeBodyErrDirect45D}. Full precision is reached for the four-body zeta function in the parameter range $12\le\nu\le 32$  for truncation value $L=6$ and for the five-body zeta function in the parameter range $26\le \nu\le 46$ for truncation value $L=3$ in steps of size $\delta \nu=1/5$.
Here, exact summation can still be performed within acceptable timescales. The resulting sums in this range remain non-trivial, as more than nearest neighbors contribute in a non-negligible way.

\begin{figure}
    \centering
    \includegraphics[width=1.\linewidth]{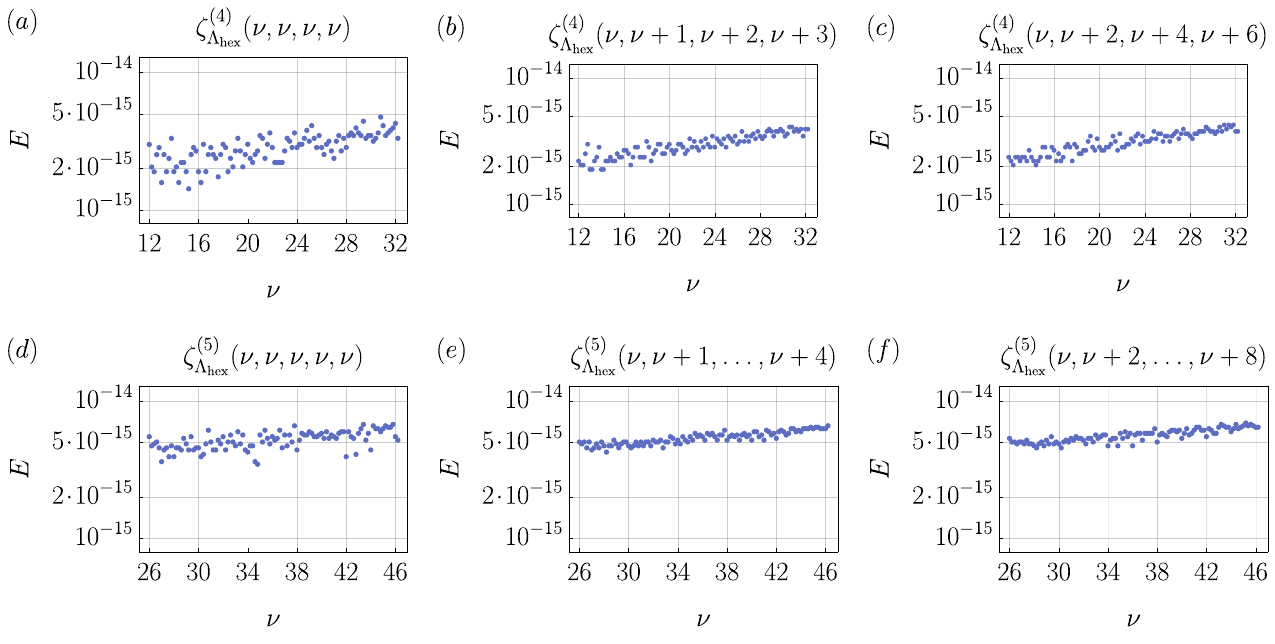}
    \caption{Minimum of absolute and relative error $E$ of the $n$-body zeta function $\zeta^{(n)}_{\Lambda}(\nu,\ldots,\nu)$ for the hexagonal lattice $\Lambda=\Lambda_{\rm hex}$ in comparison with exact summation for sufficiently large $\nu$ for $n=4$ in (a)--(c) and $n=5$ in (d)--(f).
    }
    \label{fig:ThreeBodyErrDirect45D}
\end{figure}

\begin{figure}
    \centering
    \includegraphics[width=0.47\linewidth]{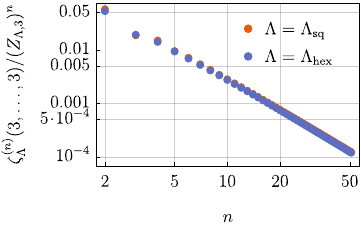}
    \caption{\new{Double-logarithmic plot of the} many-body zeta function $\zeta^{(n)}_\Lambda(\bm \nu)$ in units of the Epstein zeta function $(Z_{\Lambda,3})^n$ for $\nu_i=3$ as a function of the body number $n$ for the two-dimensional square lattice $\Lambda_\mathrm{sq}$ (black) and the hexagonal lattice $\Lambda_\mathrm{hex}$ (red). The case $n=51$ corresponds to the evaluation of a $100$-dimensional sum.}
    \label{fig:n-body-zeta}
\end{figure}

Finally, we study the behavior of the many-body zeta function as a function of $n$. As the function increases exponentially with $n$, we first establish an appropriate scaling. By inserting the bound of the Epstein zeta function, 
\[
|Z_{\Lambda,\nu}(\bm k)|=\bigg\vert \,\sideset{}{'}\sum_{\bm z\in \Lambda} \frac{e^{-2\pi i \bm z\cdot \bm k}}{\vert \bm z\vert^{\nu}}\bigg\vert\le Z_{\Lambda,\nu}(\bm 0),\quad \nu>d,
\]
into the Epstein representation in Theorem~\ref{thm:epstein_representation}, we readily obtain the bound 
\[
|\zeta^{(n)}(\bm \nu)|\le \prod_{i=1}^n Z_{\Lambda,\nu_i}(\bm 0),
\]
for $\nu_i>d$. The above bound shows that  the $n$-body zeta function increases exponentially with $n$ for exponents larger than the system dimension. A suitable normalization is thus given by the above product of Epstein zeta functions evaluated at $\bm k =\bm 0$. For ease of notation, we set $Z_{\Lambda,\nu}=Z_{\Lambda,\nu}(\bm 0)$. We show the behavior of 
\[
\zeta^{(n)}(\nu,\dots,\nu)/(Z_{\Lambda,\nu})^n
\]
as a function of $n$ in the range $n\in \{2,\dots,51\}$ for $\nu = 3$ in \Cref{fig:n-body-zeta} for the two-dimensional square lattice (black) and the hexagonal lattice (red). The case $n=2$ corresponds to a rescaled Epstein zeta function. The $n$-body zeta function then decays algebraically compared to the reference as $n$ increases. Note that the case $n=51$ corresponds to a $100$-dimensional sum, which is intractable by direct summation or  other methods. High-precision values of the $n$-body zeta function for the two-dimensional square lattice for different choices of $n$ and $\nu\in\{2,3\}$ are given in Table~\ref{tab:datasquare}.

\begin{figure}
    \centering
    \includegraphics[width=0.35\linewidth]{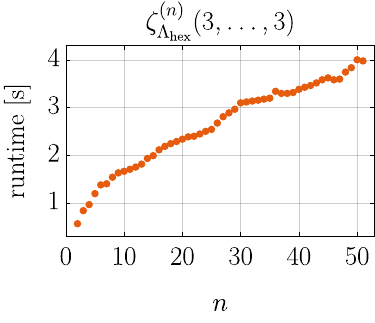}
    \caption{
    \new{Median runtime on a single core over 32 runs for a single computation of the $n$-body zeta function on the hexagonal lattice with $\nu_i=3$ as a function of $n$ on an Apple M1 Max processor. The case $n=51$ corresponds to a $100$-dimensional sum. We observe a  linear increase of the runtime in $n$, whereas the numerical work for direct summation approaches increases exponentially in the number of interacting bodies.}}
    \label{fig:runtime}
\end{figure}

\pgfplotstableread{tabSquareSmall.dat}\datatable

\begin{table}[ht]
\centering
\begin{minipage}{0.7\textwidth} 
    \centering 
    \pgfplotstabletypeset[
        columns={0,1,2}, 
        columns/0/.style={column name={$n$}},
        columns/1/.style={column name={$\nu=2$}},
        columns/2/.style={column name={$\nu = 3$}},
        every head row/.style={before row=\hline, after row=\hline},
        every last row/.style={after row=\hline},
        precision=15
    ]{\datatable}
    \caption{Values of the many-body zeta function $\zeta^{(n)}_{\Lambda}(\nu,\dots,\nu)$ for the two-dimensional square lattice for different choices of $n$ and $\nu$.}
    \label{tab:datasquare}
\end{minipage}
\end{table}

We finally study the runtime of our algorithm on 8 cores on an Apple M1 Max CPU as a function of $n$ for the two-dimensional hexagonal lattice and $\bm \nu = (3,\dots 3)^T$ in \Cref{fig:runtime}. We find that the runtime increases at most linearly with $n$, in contrast to the exponential scaling of direct summation. This is due to the singularity of the Epstein zeta function scaling as $\vert\bm k \vert^{\nu-d}$
for $\nu-d>0$. Thus products of the singularities exhibit more regularity than their constitutes and the term that determines the complexity of the integration task is thus the single singularity with the smallest value of $\nu_i$. 
For this reason, the required number of quadrature points after Duffy transformation and thus the computational cost remains effectively constant and the computational effort only increases linearly with $n$ due to the larger number of Epstein zeta functions to be evaluated. The computation of a $100$-dimensional sum on a two-dimensional hexagonal lattice requires less than $4$ seconds with the Epstein representation. Our method thus allows to compute sums at extremely high dimensions. This opens up new possibilities for both high-order and high-precision perturbative treatments of quantum many-body systems. 

\section{Influence of three-body interactions on the stability of 3D crystal lattices}
\label{sec:application}

\begin{figure}
    \centering
    \includegraphics[width=.7\linewidth]{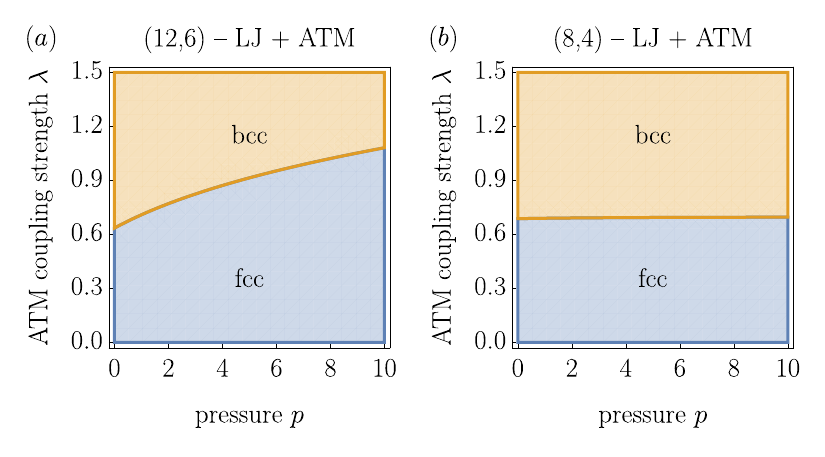}
    \caption{Phase transition at $T=0$ in three-dimensions between the face-centered cubic (fcc) lattice  (blue) and body-centered cubic (bcc) lattice (orange) for a $(12,6)$ Lennard Jones potential in (a) and an $(8,4)$ Lennard-Jones potential in (b) as a function of the pressure $p$ and the ATM coupling strength $\lambda$.}
    \label{fig:pressure}
\end{figure}

We will now apply our new method to study the influence of three-body interactions on the stability of matter. To this end, we study a three-dimensional Bravais lattice $\Lambda$ of atoms that interact via a two-body Lennard-Jones potential  of the form \cite{schwerdtfeger2024onehundred}
\[
U^{(2)}_{\mathrm{LJ}}(\bm r) = \varepsilon\frac{nm}{(n-m)}\bigg(\frac{r_e^n}{n \vert \bm r\vert^n}-\frac{r_e^m}{m \vert \bm r\vert^{m}} \bigg),
\]
with the dissociation energy $\varepsilon>0$ and the equilibrium distance $r_e>0$. In the following, we adopt dimensionless units, writing distances in units of $r_e$ and energies in terms of $\varepsilon$. In addition, we consider a three-body ATM potential as in Def.~\ref{def:atm} with coupling strength $\lambda>0$. The total cohesive energy per particle thus reads
\[
E_\mathrm{coh} = \frac{1}{2}\,\sideset{}{'}\sum_{\bm x\in \Lambda}U^{(2)}_{\mathrm{LJ}}(\bm x)+\frac{\lambda}{6}\,\sideset{}{'}\sum_{\bm x,\bm y\in \Lambda}U^{(3)}_\mathrm{ATM}(\bm x,\bm y) =E_\mathrm{coh}^{(2)}+ E_\mathrm{coh}^{(3)}.
\]
Finally, for finite pressure $p$, the enthalpy per particle is given by
\begin{equation*}
H=E_\mathrm{coh}+pV_\Lambda.
\end{equation*}
We now consider the normalized bcc and fcc lattices\new{, which we recall are} 
\[
\Lambda_\mathrm{fcc}=A_\mathrm{fcc} \mathds Z^3,\quad \Lambda_\mathrm{bcc}=A_\mathrm{bcc}\mathds Z^3
\] with unit nearest-neighbor distance, defined by their respective lattice matrices 
\[
A_\mathrm{fcc}=\frac{1}{\sqrt{2}} \begin{pmatrix}
    1 & 1 & 0\\
    1 & 0 & 1\\
    0 & 1 & 1
\end{pmatrix},\quad A_\mathrm{bcc}=\frac{1}{\sqrt{3}} \begin{pmatrix}
    1 & 1 & 0\\
    \sqrt{2} & 0 & \sqrt{2}\\
    0 & \sqrt{2} & \sqrt{2}
\end{pmatrix}.
\]
The corresponding lattices with nearest-neighbor distance $R>0$ are then obtained through rescaling. The two-body interaction energy at finite distance $R$ then follows from the Epstein zeta function as 
\[
E_\mathrm{coh}^{(2)} = \frac{nm}{2(n-m)}\bigg(\frac{Z_{\Lambda,n}(\bm 0)}{n R^n }-\frac{Z_{\Lambda,\new{m}}(\bm 0)}{m R^m} \bigg),
\]
and the three-body cohesive energy in terms of three-body zeta functions reads
\[
E_\mathrm{coh}^{(3)}=\frac{\lambda}{R^9}\bigg(\frac{1}{24}\zeta_\Lambda^{(3)}(3,3,3) - \frac{3}{16} \zeta_\Lambda^{(3)}(-1,5,5)+\frac{3}{8} \zeta_\Lambda^{(3)}(1,3,5)\bigg).
\]
We minimize the enthalpy with respect to $R$ for the fcc and the bcc lattice and compare the resulting enthalpies as a function of pressure $p$ and ATM coupling constant $\lambda$. The resulting phase transition is displayed in \Cref{fig:pressure} (a) for the $(12,6)$-LJ potential and in (b) for the softer $(8,4)$-LJ potential. We observe that for vanishing $\lambda$, the fcc phase is always energetically favorable. As $\lambda$ increases, a critical value is reached where bcc exhibits a lower enthalpy than fcc. Thus, for sufficiently large coupling strengths $\lambda$, three-body interactions can destabilize fcc and lead to the formation of a bcc phase. As a function of pressure, we observe that larger pressures favor the fcc phase, due to the smaller value of $V_\Lambda$ and thus larger packing density. While a strong dependency on $p$ of the phase boundary is observed for the $(12,6)$ potential, the phase boundary in the case of an $(8,4)$ potential only exhibits a minute dependency on the pressure $p$. Our method forms the numerical and analytical foundation of Ref.~\cite{roblesnavarro2025exact}, where the influence of three-body interactions on the stability of crystal lattices is investigated in detail.

\section{Outlook}
\label{sec:outlook}

This work solves the challenge of efficiently computing a general class of many-body lattice sums, including their meromorphic continuations, in terms of many-body zeta functions. Our method is applicable to the highly relevant three-body Axilrod-Teller-Muto potential and to a general class of multi-body interactions.  With our approach, sums for three-dimensional lattices, which traditionally require weeks of computational time on a single CPU, using standard techniques, can now be evaluated to machine precision within minutes on a laptop. Our method is general and is applicable to any lattice, any power-law interaction, and any system dimension. It allows to precisely compute the minute energy differences between lattice structures due to three-body interactions, forming the foundation for their rigorous treatment in computational chemistry in \cite{roblesnavarro2025exact}. Further, the runtime for computing $n$-body lattice sums scales at most linearly with $n$, in contrast to exponential scaling for direct summation. For a two-dimensional lattice and $n=51$, our method allows for the computation of physically relevant $100$-dimensional sums within seconds on a standard laptop. 

Accurately incorporating many-body interactions is crucial for advancing our understanding of condensed matter systems. In theoretical chemistry, our method will help explain why certain crystal structures  are stable in nature while others are not. In condensed matter physics, our method can readily be used in the perturbative study of quantum spin systems with long-range interactions, see e.g.~\cite{adelhardt2024monte, fey2019quantum}. \new{We also note the applicability of our method to high-temperature expansion for quantum spin systems \cite{burkard2026dynamic,burkard2026high}.}
Looking ahead, we will extend our method to even more general interaction graphs with direct applications to high-order perturbative treatments of quantum many-body systems.

\section*{Acknowledgements}

A.B. expresses his sincere gratitude to Peter Schwerdtfeger for inspiring discussions and for introducing him to the underlying problem in theoretical chemistry that motivated this work. He also thanks Torsten Keßler, Kirill Serkh, and Daniel Seibel for their insightful input on singular quadrature, and Jan Koziol for valuable discussions on quantum spin systems. A.B. and J.B. are grateful to Ruben Gutendorf for his important contributions to EpsteinLib. A.B. and J.B. would like to thank Sergej Rjasanow, whose unwavering support made this work possible.

\section*{Declarations}

\subsection*{Funding}
\new{This work was supported by the Klaus-Tschira Stiftung under Grant No. 00.025.2025. }
The authors gratefully acknowledge the scientific support and HPC resources provided by the Erlangen National High Performance Computing Center (NHR@FAU) of the Friedrich-Alexander-Universit\"at Erlangen-N\"urnberg (FAU) under the NHR project n101af. NHR funding is provided by federal and Bavarian state authorities. NHR@FAU hardware is partially funded by the German Research Foundation (DFG)-440719683. 
J.B. acknowledges the support of the Quantum Fellowship Program of the German Aerospace Center (DLR) for funding their contribution to this work.

\subsection*{Competing interests}
The authors declare that they have no competing interests.

\subsection*{Code availability}
The open source library EpsteinLib is freely available on \href{https://github.com/epsteinlib/epsteinlib}{GitHub}. 

\subsection*{Author contribution}
All authors contributed equally to this work.

\printbibliography

\end{document}